\theoremstyle{plain} 
\newtheorem{theorem}{\indent\sc Theorem}[section] 
\newtheorem{corollary}[theorem]{\indent\sc Corollary}
\newtheorem{proposition}[theorem]{\indent\sc Proposition}
\theoremstyle{definition} 
\newtheorem{definition}[theorem]{\indent\sc Definition}
\newtheorem{remark}[theorem]{\indent\sc Remark}
\DeclareMathOperator{\diam}{diam}
\begin{document}

\title{\uppercase{A construction of graphs with positive Ricci curvature}} 
\author{\textsc{Taiki Yamada}}
\date{} 
%


\footnote{ 
2010 \textit{Mathematics Subject Classification}.
Primary 05C12; Secondary 52C99 .
}

\keywords{ 
Graph theory, Discrete differential geometry
}

\footnote{
This work was supported by Grant-in-Aid for JSPS Research Fellow Grant Number 18J10494.
}

\address{ 
Mathematical Institute in Tohoku University \endgraf
Sendai 980-8578 \endgraf
Japan
}
\email{mathyamada@dc.tohoku.ac.jp}


\maketitle

\begin{abstract}
Two complete graphs are connected by adding some edges. The obtained graph is called the gluing graph. The more we add edges, the larger the Ricci curvature on it becomes. We calculate the Ricci curvature of each edge on the gluing graph and obtain the least number of edges that result in the gluing graph having positive Ricci curvature.
\end{abstract}

\section{Introduction}
 The Ricci curvature is one of the most important concepts in Riemannian geometry. There are some definitions for the generalized Ricci curvature, and Ollivier's coarse Ricci curvature is one of them (see \cite{Ol1}). It is formulated using the Wasserstein distance on a metric space $(X, d)$ with a random walk $m=\left\{m_{x} \right\}_{x \in X}$, where each $m_{x}$ is a probability measure on $X$. The coarse Ricci curvature is defined as follows: For two distinct points $x, y \in X$, 
	\begin{eqnarray*}
	\kappa(x, y) := 1 - \cfrac{W(m_{x}, m_{y})}{d(x, y)},
	\end{eqnarray*}
  where $W$($m_{x}, m_{y}$) is the $1$-Wasserstein distance between $m_{x}$ and $m_{y}$. This definition was applied to graphs in the year 2010 and many researchers are focused on this. \\
　In 2010, Lin-Lu-Yau \cite{Yau1} defined the Ricci curvature of an undirected graph by using the coarse Ricci curvature of the lazy random walk. They studied the Ricci curvature of the product space of graphs and random graphs. They also considered a graph of positive Ricci curvature and proved some properties. In 2012, Jost and Liu \cite{Jo2} studied the relationship between the coarse Ricci curvature of the simple random walk and the local clustering efficient. In this paper, the coarse Ricci curvature is simply called the {\em Ricci curvature}. Given that we obtain several global properties on graphs with positive Ricci curvature by their results, we would like to constitute a new graph with positive Ricci curvature. A complete graph $K_n$ is one of graphs with positive Ricci curvature. In fact, we have $\kappa(x, y) = (n-2)/(n-1)$ for any edge $(x, y) \in E(K_n)$. Therefore, in this paper, we start from two complete graphs $K_n$ and $K'_n$, and connect them by several edges. For the obtained graph that is called the $m$-gluing graph (See Definition \ref{gluing}), we obtain the following theorem regarding the minimal number of $m$ satisfying the condition that the $m$-gluing graph must have positive Ricci curvature.
\begin{theorem}
\label{主結果}
For the $m$-gluing graph $K_{n} +_m K'_{n}$ of two complete graphs $K_{n}$ and $K'_{n}$, $n \geq 5$, we have $\kappa(x, y) > 0$ for any edge $(x, y) \in E$ if and only if 
\begin{eqnarray*}
\cfrac{n^{2} - 2n}{n + 2} < m \leq n - 1.
\end{eqnarray*}
\end{theorem}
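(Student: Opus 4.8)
The plan is to compute the Ollivier--Lin--Lu--Yau Ricci curvature $\kappa(x,y)$ on \emph{every} edge of $G:=K_{n}+_{m}K'_{n}$ and then locate the edge on which it is smallest. For $\alpha\in[0,1)$ let $\mu_{x}^{\alpha}$ be the lazy random walk probability measure at $x$ (mass $\alpha$ at $x$ and mass $(1-\alpha)/d_{x}$ at each neighbour of $x$, where $d_{x}$ is the degree of $x$), so that $\kappa(x,y)=\lim_{\alpha\to 1^{-}}\bigl(1-W(\mu_{x}^{\alpha},\mu_{y}^{\alpha})\bigr)/(1-\alpha)$. It is known that $1-W(\mu_{x}^{\alpha},\mu_{y}^{\alpha})=(1-\alpha)\kappa(x,y)$ already for $\alpha$ sufficiently close to $1$, so it suffices to evaluate $W(\mu_{x}^{\alpha},\mu_{y}^{\alpha})$ to first order in $1-\alpha$.

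First I would use the symmetry of the construction to cut the edge set into a few orbits: an edge inside $K_{n}$ (or, symmetrically, inside $K'_{n}$) is classified by whether $0$, $1$, or $2$ of its endpoints are incident to a gluing edge, and the gluing edges form one more type. For a representative $(x,y)$ of each type I would (a) exhibit an explicit transport plan from $\mu_{x}^{\alpha}$ to $\mu_{y}^{\alpha}$ --- carrying the bulk mass along the edge $(x,y)$, leaving the mass on common neighbours in place, and pushing the mass on the ``private'' neighbours of one endpoint toward the other endpoint, at distance $2$ or $3$ --- which gives an upper bound for $W$; and (b) exhibit a $1$-Lipschitz function $f$ on $G$ --- roughly, a function that is constant on each clique with a jump of $\pm 1$ across the gluing edges, perturbed by $\pm 1$ on the private neighbours --- and invoke Kantorovich duality, $W(\mu_{x}^{\alpha},\mu_{y}^{\alpha})\ge\int f\,d(\mu_{x}^{\alpha}-\mu_{y}^{\alpha})$, for a matching lower bound. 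When the bounds in (a) and (b) agree, this pins down $\kappa(x,y)$ exactly as a rational function of $n$ and $m$.

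With these formulas in hand I would compare them as functions of $m$. For edges in the interior of a clique that avoid the gluing, $\kappa=(n-2)/(n-1)>0$, independently of $m$ (the complete-graph value recalled in the introduction). For the edges that meet the gluing, adding more gluing edges creates more common neighbours and shorter detours, so $\kappa$ increases with $m$, and the minimum over all edges should be attained on the gluing edges --- or, should the computation in step (b) so indicate, on a clique edge with exactly one endpoint on the gluing. Imposing that this minimum be positive and clearing denominators should collapse to an inequality of the shape $(n+2)m>n^{2}-2n$, i.e.\ $m>\tfrac{n^{2}-2n}{n+2}$; since $\tfrac{n^{2}-2n}{n+2}<n-1$ for all relevant $n$, and since $m\le n-1$ is the range of $m$ allowed by the definition of the $m$-gluing graph, the stated interval results. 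The ``only if'' direction is the same computation read backwards: as soon as $m\le\tfrac{n^{2}-2n}{n+2}$, the minimizing edge already has $\kappa\le 0$.

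I expect the real work to be in steps (a)--(b) for the edges touching the gluing: there the measures $\mu_{x}^{\alpha}$ and $\mu_{y}^{\alpha}$ are genuinely asymmetric (unequal degrees, few or no common neighbours), so it is not obvious that the naive transport plan is optimal, and producing the matching $1$-Lipschitz potential is the crucial certificate. A secondary, more bookkeeping-type difficulty is that $\tfrac{n^{2}-2n}{n+2}=n-4+\tfrac{8}{n+2}$ is not an integer, so for the smallest values of $n$ the integrality of $m$ must be tracked by hand; the hypothesis $n\ge 5$ is presumably exactly what keeps the minimizing edge type stable and the final comparison uniform.
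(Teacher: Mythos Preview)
Your overall architecture --- reduce by symmetry to a handful of edge types, then for each representative bound $W$ above by an explicit transport plan and below by a Kantorovich $1$-Lipschitz potential --- is exactly what the paper does. But there is a genuine gap at the very first step: you are computing the wrong curvature. The $\kappa$ in this paper is defined via the \emph{simple} random walk $m_x$ (mass $1/d_x$ on each neighbour, zero at $x$), i.e.\ your $\mu_x^{0}$ at idleness $\alpha=0$, and $\kappa(x,y)=1-W(m_x,m_y)$ for adjacent $x,y$. It is \emph{not} the Lin--Lu--Yau limit $\lim_{\alpha\to1^-}(1-W(\mu_x^\alpha,\mu_y^\alpha))/(1-\alpha)$ that you set up. These invariants differ already on $K_n$ (the paper's $\kappa$ is $(n-2)/(n-1)$, the Lin--Lu--Yau value is $n/(n-1)$), and since in general $\kappa_0\le\kappa_{\mathrm{LLY}}$, positivity of the quantity you propose to compute does not imply positivity of the paper's $\kappa$. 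Your first-order-in-$(1-\alpha)$ calculation would therefore prove a different statement, with no a priori reason for the threshold to come out as $(n^2-2n)/(n+2)$.

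A second, smaller gap: your monotonicity heuristic (``adding more gluing edges creates more common neighbours\ldots so $\kappa$ increases with $m$'') is false for several edge types. For the edge $(u_0,u)$ with $u\in\Gamma_m(v_0)$ the paper finds $\kappa=(n^{2}-mn+2m)/\bigl(n(n+m)\bigr)$, which \emph{decreases} in $m$; the edge $(u_0,u)$ with $u\in V(K_n)\setminus(\Gamma_m(v_0)\cup\{u_0\})$ behaves similarly. The paper does not argue by monotonicity at all: it computes or bounds $\kappa$ on all seven symmetry classes separately and then observes that the binding constraint is $\kappa(u_0,v)>0$ for $v\in\Gamma_m(u_0)$, which after the coupling/Lipschitz computation reads $(n+2)m>n^{2}-2n$, while the remaining six classes stay strictly positive throughout $1\le m\le n-1$ once $n\ge5$. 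You would need to carry out that full case analysis rather than appeal to a monotonicity that does not hold.
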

If we calculate the Ricci curvature of edges on the $m$-gluing graph by the estimates of known results (see Theorem \ref{Jost-Liuの評価} and Theorem \ref{Jost-Liuの評価２}), then there exist several edges such that the estimate of either Theorem \ref{Jost-Liuの評価} or Theorem \ref{Jost-Liuの評価２} is not optimal (see Remark \ref{example1} and Remark \ref{example2}). Thus we calculate the Ricci curvature of each edge based on the definition of the Ricci curvature.\\
　In the last section, we obtain some estimates of the eigenvalues of the normalized graph Laplacian and the Cheeger constant by the Ricci curvature on the $m$-gluing graph.

\section*{acknowledgment} 
The author thanks his supervisor, Professor Takashi Shioya, for his continuous support and providing important comments. He also thanks Editage (www.editage.jp) for English language editing.

\section{Ricci curvature}
In this paper, we assume that $G=(V, E)$ is an undirected connected simple finite graph, where $V$ is the set of the vertices and $E$ the set of edges. That is,
	\begin{enumerate}
	\item for any two vertices, there exists a path connecting them,
	\item there exists no loop and no multiple edges,
	\item and the number of vertices and edges is finite.
	\end{enumerate}
For $x, y \in V$, we write $(x, y)$ as an edge from $x$ to $y$. We denote the set of vertices of a graph $H$ by $V(H)$ and the set of edges of $H$ by $E(H)$.

\begin{definition}
	\begin{enumerate}
  	\renewcommand{\labelenumi}{(\arabic{enumi})}
	\item A {\em path} from a vertex $x \in V$ to a vertex $y \in V$ is a sequence of edges $\left\{(a_{i}, a_{i+1)} \right\}_{i=0}^{n-1}$, where $a_{0} = x$, $a_{n} = y$. We call $n$ the {\em length} of the path.
   	\item The {\em distance} $d(x, y)$ between two vertices $x, y \in V$ is given by the length of a shortest path between $x$ and $y$.
	\item The {\em diameter} of $G$, denoted by $\diam G$, is given by the maximum of the distance between any two vertices on $G$.
   	\end{enumerate}
\end{definition}

\begin{definition}
	\begin{enumerate}
 	\renewcommand{\labelenumi}{(\arabic{enumi})}
 	\item For any $x \in V$, the {\em neighborhood} of $x$ is defined as
   		\begin{eqnarray*}
   		\Gamma(x) := \left\{y \in V \mid (x, y) \in E \right\}.
   		\end{eqnarray*}
  	\item For any $x \in V$, the {\em degree} of $x$, denoted by $d_{x}$, is the number of edges starting from $x$.
	\item If every vertex has the same degree $d$, then we call $G$ a $d$-{\em regular graph}.
   	\end{enumerate}
\end{definition}

\begin{definition}
For any vertex $x \in V$, we define a probability measure $m_{x}$ on $V$ by
	\begin{eqnarray*}
    	m_{x}(v) = 
    		\begin{cases}
    		\cfrac{1}{d_{x}}, &\mathrm{if}\ (x, v) \in E, \\
    		0, & \mathrm{\mathrm{otherwise}}.
    		\end{cases}
   	\end{eqnarray*}
\end{definition} 
$m=\left\{ m_x \right\}_{x \in V}$ is called the {\em simple random walk}.

To define the Ricci curvature, we define the 1-Wasserstein distance as follows:
\begin{definition}
The 1-Wasserstein distance between any two probability measures $\mu$ and $\nu$ on $V$ is given as follows
	\begin{eqnarray*}
	W(\mu, \nu) = \inf_{A} \sum_{u, v \in V}A(u, v)d(u, v),
	\end{eqnarray*}
	where $A : V(G) \times V(G) \to [0, 1]$ runs over all maps satisfying 
	\begin{eqnarray}
	\label{coupling}
		\begin{cases}
		\sum_{v \in V}A(u, v) = \mu(u),\\
		\sum_{u \in V}A(u, v) = \nu(v).
		\end{cases}
	\end{eqnarray}
Such a map $A$ is called a {\em coupling} between $\mu$ and $\nu$. 
\end{definition}
	\begin{remark}
	There exists a coupling $A$ that attains the 1-Wasserstein distance (see \cite{Le}, \cite{Vi1} and \cite{Vi2}), and we call it an {\em optimal coupling}.
	\end{remark}
	
   One of the most important properties of the 1-Wasserstein distance is the Kantorovich-Rubinstein duality, which is stated as follows:
	\begin{proposition}[Kantorovich, Rubinstein]
	\label{kantoro}
	The 1-Wasserstein distance between any two probability measures $\mu$ and $\nu$ on $V$ is written as
		\begin{eqnarray*}
		W(\mu, \nu) = \sup_{f} \sum_{u \in V} f(u)(\mu(u) - \nu(u)),
		\end{eqnarray*}
	where the supremum is taken over all functions on $G$ that satisfy $|f(u)-f(v)| \leq d(u,v)$ for any $u, v \in V$. 
	\end{proposition}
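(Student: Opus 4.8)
The plan is to treat the computation of $W(\mu,\nu)$ as a finite-dimensional linear program and to apply strong linear-programming duality, after which only a regularization of the dual variables remains. Since $V$ is finite, the couplings form a nonempty compact polytope — it contains the product coupling $A(u,v)=\mu(u)\nu(v)$, which clearly satisfies \eqref{coupling} — so the infimum in the definition of $W(\mu,\nu)$ is attained (as recorded in the Remark above) and equals the optimal value of the linear program in the variables $\{A(u,v)\}_{u,v\in V}$: minimize $\sum_{u,v\in V}A(u,v)\,d(u,v)$ subject to $A\ge 0$ and the marginal constraints \eqref{coupling}.

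Next I would write down the dual. Assigning a multiplier $f(u)$ to each constraint $\sum_{v}A(u,v)=\mu(u)$ and a multiplier $g(v)$ to each constraint $\sum_{u}A(u,v)=\nu(v)$, the dual program reads
\[
\text{maximize } \sum_{u\in V}f(u)\mu(u)+\sum_{v\in V}g(v)\nu(v)
\quad\text{subject to}\quad f(u)+g(v)\le d(u,v)\ \ (u,v\in V).
\]
Both programs are feasible (the primal by the product coupling, the dual by $f=g=0$, using $d\ge 0$) and the primal objective is bounded below by $0$, so strong duality applies and $W(\mu,\nu)$ equals the optimal value of this dual program.

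It then remains — and this is the only substantive step — to reduce the two-function dual to the one-Lipschitz-function supremum in the statement. Given a feasible pair $(f,g)$, I would set $g'(v):=\inf_{u\in V}\bigl(d(u,v)-f(u)\bigr)$ and then $f'(u):=\inf_{v\in V}\bigl(d(u,v)-g'(v)\bigr)$. From $f(u)+g(v)\le d(u,v)$ one gets $g'\ge g$ pointwise, hence $f'\ge f$ pointwise, so since $\mu,\nu\ge 0$ each replacement can only increase the dual objective, while the constraint $f'(u)+g'(v)\le d(u,v)$ is preserved throughout. Using that $d$ is symmetric and obeys the triangle inequality, a direct check shows that $f'$ and $g'$ are $1$-Lipschitz and that $g'=-f'$; therefore the objective becomes $\sum_{u\in V}f'(u)\bigl(\mu(u)-\nu(u)\bigr)$ with $f'$ admissible in the statement. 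Conversely, every $1$-Lipschitz $f$ yields the feasible dual pair $(f,-f)$ with objective $\sum_{u\in V}f(u)\bigl(\mu(u)-\nu(u)\bigr)$. Hence the dual optimum coincides with $\sup_{f}\sum_{u\in V}f(u)\bigl(\mu(u)-\nu(u)\bigr)$ over $1$-Lipschitz $f$, which equals $W(\mu,\nu)$. The main obstacle is precisely this last ($c$-transform) argument; the linear-programming duality itself is routine, and the finiteness of $V$ sidesteps the measure-theoretic subtleties present in the general statement of \cite{Le}, \cite{Vi1}, \cite{Vi2}.
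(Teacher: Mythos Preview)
The paper does not prove this proposition at all: it is stated as a classical result attributed to Kantorovich and Rubinstein and used as a black box, with the general theory deferred to \cite{Le}, \cite{Vi1}, \cite{Vi2}. Your argument is a correct self-contained proof in the finite setting, following the standard route (LP strong duality followed by the $c$-transform reduction $(f,g)\mapsto(f',-f')$); the verification that $g'=-f'$ is indeed a direct consequence of $g'$ being $1$-Lipschitz and $d(u,u)=0$, so nothing is missing there.
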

A function $f$ on $V$ is said to be {\em $1$-Lipschitz} if $|f(u)-f(v)| \leq d(u,v)$ for any $u, v \in V$.
\begin{definition}
	\label{Ricci}
For any two distinct vertices $x, y \in V$, the {\em Ricci curvature} of $x$ and $y$ is defined as follows:
   	\begin{eqnarray*}
    	\kappa(x, y) =  1 - \cfrac{W(m_{x}, m_{y})}{d(x, y)}.
   	\end{eqnarray*}
\end{definition}

Whenever we are interested in a lower bound of Ricci curvature, the following lemma implies that it is sufficient to consider the Ricci curvature of the edge, although the Ricci curvature is defined for any pair of vertices.
\begin{proposition}[Lin-Lu-Yau, \cite{Yau1}]
If $\kappa(u, v) \geq k$ for any edge $(u, v) \in E$ and for a real number $k$, then $\kappa(x, y) \geq k$ for any pair of vertices $(x, y) \in V \times V$.
\end{proposition}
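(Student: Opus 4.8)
The plan is to reduce the statement for an arbitrary pair of vertices to the already-assumed estimate on edges, by walking along a shortest path and invoking the triangle inequality for the $1$-Wasserstein distance. So fix two distinct vertices $x, y \in V$, put $n = d(x,y)$, and choose a shortest path $x = x_0, x_1, \dots, x_n = y$. Then each consecutive pair $(x_i, x_{i+1})$ is an edge with $d(x_i, x_{i+1}) = 1$, so the hypothesis $\kappa(x_i, x_{i+1}) \geq k$ rewrites as $W(m_{x_i}, m_{x_{i+1}}) \leq 1 - k$ for every $i = 0, \dots, n-1$.

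The key step is the inequality $W(m_x, m_y) \leq \sum_{i=0}^{n-1} W(m_{x_i}, m_{x_{i+1}})$, i.e.\ the fact that $W$ satisfies the triangle inequality on probability measures. I would deduce it directly from the Kantorovich--Rubinstein duality (Proposition \ref{kantoro}): for any $1$-Lipschitz function $f$ on $G$ one has the telescoping identity $m_x(u) - m_y(u) = \sum_{i=0}^{n-1}\bigl(m_{x_i}(u) - m_{x_{i+1}}(u)\bigr)$, hence
\[
\sum_{u \in V} f(u)\bigl(m_x(u) - m_y(u)\bigr) = \sum_{i=0}^{n-1} \sum_{u \in V} f(u)\bigl(m_{x_i}(u) - m_{x_{i+1}}(u)\bigr) \leq \sum_{i=0}^{n-1} W(m_{x_i}, m_{x_{i+1}}),
\]
where each inner sum is bounded using the duality applied to the single pair $(m_{x_i}, m_{x_{i+1}})$ with the same test function $f$. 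Taking the supremum over all $1$-Lipschitz $f$ on the left then gives the asserted bound. (Alternatively one could glue optimal couplings along the path, but the dual formulation makes the estimate immediate.)

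Combining the two steps, $W(m_x, m_y) \leq \sum_{i=0}^{n-1}(1-k) = (1-k)\,n = (1-k)\,d(x,y)$, and therefore
\[
\kappa(x,y) = 1 - \frac{W(m_x, m_y)}{d(x,y)} \geq 1 - (1-k) = k,
\]
which is the claim. I do not expect a genuine obstacle here; the one point deserving a line of care is the exchange of supremum and sum in the displayed estimate, which is legitimate precisely because the bound $\sum_{u} f(u)(m_{x_i}(u) - m_{x_{i+1}}(u)) \leq W(m_{x_i}, m_{x_{i+1}})$ holds for each fixed $f$ before passing to the supremum on the left-hand side.
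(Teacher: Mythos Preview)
Your argument is correct. The reduction to edges via a shortest path, together with the triangle inequality for $W$ obtained from Kantorovich--Rubinstein duality, is exactly the standard way to prove this proposition, and every step you wrote checks out (including your remark about the order of supremum and sum).

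There is nothing to compare against in this paper: the proposition is only stated here and attributed to Lin--Lu--Yau \cite{Yau1}, with no proof given. Your proof is essentially the one found in that reference (and in Ollivier's original paper), so you have reconstructed the intended argument.
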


To calculate the Ricci curvature, Jost and Liu gave the following estimate of the Ricci curvature.
\begin{theorem}[Jost-Liu \cite{Jo2}]
\label{Jost-Liuの評価}
On a locally finite graph, for any pair of neighboring vertices $x$ and $y$, we have
\begin{eqnarray*}
\kappa(x, y) \geq - \left( 1- \cfrac{1}{d_x} - \cfrac{1}{d_y} - \cfrac{\# (x, y)}{d_x \wedge d_y} \right)_+ - \left( 1- \cfrac{1}{d_x} - \cfrac{1}{d_y} - \cfrac{\# (x, y)}{d_x \vee d_y} \right)_+ + \cfrac{\# (x, y)}{d_x \vee d_y},
\end{eqnarray*}
where $\# (x, y)$ is the number of triangles which includes $x, y$ as vertices, and $s_+ := \max (s, 0)$, $s \vee t := \max (s, t)$, and $s \wedge t := \min (s, t)$ for real numbers $s$ and $t$.
\end{theorem}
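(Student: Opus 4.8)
The plan is to use that for neighbouring vertices $d(x,y)=1$, so $\kappa(x,y)=1-W(m_x,m_y)$, and hence it is enough to construct an explicit coupling $A$ between $m_x$ and $m_y$ whose transport cost $\sum_{u,v}A(u,v)\,d(u,v)$ is at most $1+\bigl(1-\tfrac1{d_x}-\tfrac1{d_y}-\tfrac{\#(x,y)}{d_x\wedge d_y}\bigr)_+ +\bigl(1-\tfrac1{d_x}-\tfrac1{d_y}-\tfrac{\#(x,y)}{d_x\vee d_y}\bigr)_+ -\tfrac{\#(x,y)}{d_x\vee d_y}$. Assume without loss of generality that $d_x\le d_y$, put $t=\#(x,y)$, and split the two neighbourhoods as $\Gamma(x)=\{y\}\sqcup C\sqcup A$ and $\Gamma(y)=\{x\}\sqcup C\sqcup B$, where $C=\Gamma(x)\cap\Gamma(y)$ is the set of common neighbours, $|C|=t$, $|A|=d_x-1-t$, and $|B|=d_y-1-t\ge|A|$. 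Thus $m_x$ puts mass $1/d_x$ on each vertex of $\{y\}\cup C\cup A$ and $m_y$ puts mass $1/d_y$ on each vertex of $\{x\}\cup C\cup B$, and $d_x\wedge d_y=d_x$, $d_x\vee d_y=d_y$.

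Next I would build the coupling transporting $m_x$ onto $m_y$ in four stages. First, each vertex of $C$ keeps mass $1/d_y$ where it is, which costs nothing and leaves a surplus $1/d_x-1/d_y\ge0$ there. Second, the demand $1/d_y$ at $x$ is satisfied, in this order of priority, from the mass on $A$, then from the $C$-surplus, and only last from the mass at $y$; each of these sources is adjacent to $x$, so this stage costs distance $1$ per unit. Third, all mass remaining at $y$ is sent into $B$ — every vertex of $B$ being adjacent to $y$ — spilling over onto $x$ or $C$ (still at distance $1$ from $y$) should $B$ be too small to hold it. Fourth, whatever surplus is still on $C$ is routed into the residual demand of $B$ along paths $z\,y\,w$ (cost at most $2$ per unit), and any surplus still on $A$ is routed into $B$ along paths $a\,x\,y\,w$ (cost at most $3$ per unit); these distance bounds follow at once from the edge $xy$ together with the adjacencies defining $A,B,C$. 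The amount of $A$-mass that survives Stage 2 is exactly $(|A|/d_x-1/d_y)_+$ and the demand left on $B$ after Stage 3 is controlled by $(|B|/d_y-1/d_x)_+$, which is how the two truncated terms of the statement arise.

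A routine mass-balance check confirms the prescription is a genuine coupling (the supply out of $A$, $C$, $\{y\}$ exactly matches the demand of $x$, $C$, $B$), and a direct computation evaluates its cost. In the principal regime, where both $|A|/d_x-1/d_y$ and $|B|/d_y-1/d_x$ are nonnegative, the four contributions sum to $3-\tfrac{2+t}{d_x}-\tfrac{2+2t}{d_y}$, which one checks is equal to $1+(|A|/d_x-1/d_y)+(|B|/d_y-1/d_x)-t/d_y$, i.e.\ the asserted bound. If $|A|/d_x<1/d_y$ — which forces $|A|=0$ — then no surplus remains on $A$ and no mass travels distance $3$; if $|B|/d_y<1/d_x$, the mass at $y$ already covers all of $B$ at distance $1$. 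In either degenerate situation the cost only decreases while the corresponding $(\cdot)_+$ term is $0$, so the bound on $W(m_x,m_y)$ still holds, and $\kappa(x,y)=1-W(m_x,m_y)$ gives the theorem.

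The part I expect to be the real work is this last bookkeeping: one must organise the argument by the signs of $|A|/d_x-1/d_y$ and $|B|/d_y-1/d_x$, together with the borderline cases $|A|=0$, $|B|=0$, $t=0$ and the threshold that decides, when $|A|=0$, whether the $C$-surplus by itself can fill $x$; and in each case one must check that the routing is nonnegative, has the right marginals $m_x,m_y$, and moves every unit of mass along a path no longer than claimed. The geometric input (the distances $0$, $\le1$, $\le2$, $\le3$ between the pieces $x$, $y$, $C$, $A$, $B$) is elementary; only the combinatorial bookkeeping needs care.
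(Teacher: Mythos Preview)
The paper does not supply its own proof of this statement: Theorem~\ref{Jost-Liuの評価} is quoted from Jost--Liu \cite{Jo2} and used as a black box, so there is no in-paper argument to compare against. That said, your outline is precisely the coupling construction from \cite{Jo2}: decompose $\Gamma(x)$ and $\Gamma(y)$ into $\{y\}\sqcup C\sqcup A$ and $\{x\}\sqcup C\sqcup B$, leave $1/d_y$ of the mass on each common neighbour, and route the rest along paths of length at most $1$, $2$, $3$ according to whether the source lies in $\{y\}$, $C$, or $A$. Your identification of $1-\tfrac{1}{d_x}-\tfrac{1}{d_y}-\tfrac{t}{d_x\wedge d_y}$ with $|A|/d_x-1/d_y$ and of the companion term with $|B|/d_y-1/d_x$ is exactly what makes the bookkeeping transparent, and your observation that $|A|/d_x<1/d_y$ forces $|A|=0$ (since $d_x\le d_y$) is the clean way to collapse that degenerate branch.

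One small point to tighten when you write it out in full: in the principal-regime calculation you implicitly send \emph{all} of the $C$-surplus and \emph{all} of the $A$-surplus to $B$, which is legitimate because the total surplus $t(1/d_x-1/d_y)+(|A|/d_x-1/d_y)$ equals the residual $B$-demand $|B|/d_y-1/d_x$ exactly; you should state this balance explicitly, since otherwise a reader may wonder whether Stage~4 over- or under-fills $B$. In the degenerate branch $|B|/d_y<1/d_x$ you also need to say where the leftover $y$-mass goes after $B$ is full (to $x$ or to $C$, at distance $1$), and check that in that branch no $A$- or $C$-surplus needs to travel at all --- this is what makes the corresponding $(\cdot)_+$ term vanish without spoiling the other terms. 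None of this is difficult, but as you correctly flag, it is where the care is required.
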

\begin{theorem}[Jost-Liu \cite{Jo2}]
\label{Jost-Liuの評価２}
On a locally finite graph, for any pair of neighboring vertices $x$ and $y$, we have
\begin{eqnarray*}
\kappa(x, y) \leq \cfrac{\#(x,y)}{d_x \vee d_y}.
\end{eqnarray*}
\end{theorem}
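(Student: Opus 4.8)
The plan is to reduce the claim to a lower bound on the Wasserstein distance and then estimate that distance directly from the transport definition. Since $x$ and $y$ are neighbors, $d(x,y)=1$, so $\kappa(x,y)=1-W(m_x,m_y)$, and the inequality to be proved is exactly $W(m_x,m_y)\ge 1-\#(x,y)/(d_x\vee d_y)$. One could instead test the Kantorovich--Rubinstein duality (Proposition~\ref{kantoro}) against a carefully chosen $1$-Lipschitz function, but working from the coupling definition seems cleaner, because what really matters is how little mass any coupling can leave fixed.

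First I would note that for an arbitrary coupling $A$ between $m_x$ and $m_y$, the defining identities $\sum_v A(u,v)=m_x(u)$ and $\sum_v A(v,u)=m_y(u)$ force $A(u,u)\le\min\bigl(m_x(u),m_y(u)\bigr)$ at every vertex $u$, hence $\sum_u A(u,u)\le\sum_u\min\bigl(m_x(u),m_y(u)\bigr)$. Since the coupling has total mass $1$, the off-diagonal mass satisfies $\sum_{u\ne v}A(u,v)\ge 1-\sum_u\min(m_x(u),m_y(u))$, and as $d(u,v)\ge 1$ for distinct $u,v$, this gives $\sum_{u,v}A(u,v)d(u,v)\ge 1-\sum_u\min(m_x(u),m_y(u))$; taking the infimum over couplings yields $W(m_x,m_y)\ge 1-\sum_u\min(m_x(u),m_y(u))$.

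Next I would compute $\sum_u\min(m_x(u),m_y(u))$ by a short case analysis over the vertices $u$. On a common neighbor of $x$ and $y$ we have $m_x(u)=1/d_x$ and $m_y(u)=1/d_y$, so the minimum is $1/(d_x\vee d_y)$, and there are $\#(x,y)$ such vertices; at $u=x$ we have $m_x(x)=0$ because the graph has no loops, at $u=y$ we have $m_y(y)=0$, and every remaining vertex is a neighbor of at most one of $x,y$, so at least one of $m_x(u),m_y(u)$ vanishes there. Hence $\sum_u\min(m_x(u),m_y(u))=\#(x,y)/(d_x\vee d_y)$, and substituting this into the previous bound gives $\kappa(x,y)=1-W(m_x,m_y)\le\#(x,y)/(d_x\vee d_y)$.

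I do not expect this to present a serious obstacle; the one place to be careful is the case analysis, where one must check that the supports of $m_x$ and $m_y$ overlap only at the common neighbors and that the pointwise minimum there is $1/(d_x\vee d_y)$, so that the \emph{larger} of the two degrees appears in the denominator. If one prefers the dual approach, the same conclusion follows (say when $d_x\le d_y$) by testing Proposition~\ref{kantoro} against the $1$-Lipschitz function $f(v)=\min\bigl(1,\,d(v,\,\Gamma(y)\setminus\Gamma(x))\bigr)$, which equals $1$ at every neighbor of $x$ and vanishes on $\Gamma(y)\setminus\Gamma(x)$; since $\Gamma(y)\setminus\Gamma(x)$ consists of $x$ together with the neighbors of $y$ that are not neighbors of $x$, a direct count gives $\sum_u f(u)\bigl(m_x(u)-m_y(u)\bigr)=1-\#(x,y)/d_y$, and hence once more $\kappa(x,y)\le\#(x,y)/(d_x\vee d_y)$.
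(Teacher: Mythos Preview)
The paper does not supply its own proof of this statement; Theorem~\ref{Jost-Liuの評価２} is quoted from Jost--Liu \cite{Jo2} and used as a black box in the subsequent curvature computations. There is therefore nothing in the paper to compare your argument against.

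That said, your proof is correct. The coupling inequality $W(m_x,m_y)\ge 1-\sum_u\min\bigl(m_x(u),m_y(u)\bigr)$ is precisely the total-variation lower bound on the $1$-Wasserstein distance, and your case analysis identifying the overlap of the two supports with the set of common neighbors---each contributing $1/(d_x\vee d_y)$---is accurate, since $m_x(x)=m_y(y)=0$ in the simple random walk and no other vertex lies in both $\Gamma(x)$ and $\Gamma(y)$. The alternative dual argument you sketch, testing Proposition~\ref{kantoro} against $f(v)=\min\bigl(1,d(v,\Gamma(y)\setminus\Gamma(x))\bigr)$, also goes through and is closer in spirit to the $1$-Lipschitz test-function method the paper itself employs in its later propositions.
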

 
\section{Proof of the main result}
Let $K_n$ and $K'_n$ be complete graphs, where $V(K_n) = \left\{ u_0, u_1, \cdots, u_{n-1} \right\}$ and $V(K'_n) = \left\{ v_0, v_1, \cdots, v_{n-1} \right\}$. Before we prove Theorem \ref{主結果}, we define the $m$-gluing graph of $K_n$ and $K'_n$ as follows: 
\begin{definition}
\label{gluing}
The {\em $m$-gluing graph} of two complete graphs $K_n$ and $K'_n$, denoted by $K_{n} +_m K'_{n}$, is defined by the following.
\begin{enumerate}
\item The vertex set $V(K_{n} +_m K'_{n}) = V(K_n) \cup V(K'_n)$.
\item The edge set $E(K_n +_m K'_n) = E(K_n) \cup E(K'_n) \cup \left\{ (u_0, v_0) \right\} \cup \left\{ (u_0, v_i) \mid 1 \leq i \leq m \right\} \cup \left\{ (u_j , v_0) \mid 1 \leq j \leq m \right\}$.
\end{enumerate}
We denote $\left\{ v_i \in V(K'_n) \mid 1 \leq i \leq m \right\}$ and $\left\{ u_j \in V(K_n) \mid 1 \leq j \leq m \right\}$ by $\Gamma_m (u_0)$ and $\Gamma_m (v_0)$, respectively.
\end{definition}
\subsection{Proof of Theorem \ref{主結果} in the case of $\bf m=n-1$}
To obtain the value of $\kappa(x, y)$ on any edge $(x, y) \in E(K_{n} +_{n-1} K'_{n})$, it is sufficient to calculate $\kappa(u_0, v_0)$, $\kappa(u_0,v_1)$, $\kappa(u_0,u_1)$, and $\kappa(u_1, u_2)$ by the symmetry of the $m$-gluing graph. Thus, combining Theorem \ref{n-1の場合のRicci曲率} and Proposition \ref{３番目}, we obtain $\kappa(x, y) > 0$ for any edge $(x, y) \in E(K_{n} +_{n-1} K'_{n})$.
\begin{theorem}
\label{n-1の場合のRicci曲率}
Regarding the gluing graph $K_{n} +_{n-1} K'_{n}$, we have
\begin{enumerate}\rm 
\item $\kappa(u_1, u_2) = \cfrac{n-1}{n}$,
\item $\kappa(u_0,v_0) = \cfrac{2n-2}{2n-1}$,
\item $\kappa(u_0, v_1) = \cfrac{3n - 2}{n(2n-1)}$.
\end{enumerate}
\end{theorem}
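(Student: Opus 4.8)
The plan is to compute each of the three Ricci curvatures directly from Definition \ref{Ricci}, which requires first finding $W(m_x,m_y)$ for the three representative edges in $K_n+_{n-1}K'_n$. Throughout I would record the degrees: in the case $m=n-1$ every vertex $u_i$ with $1\le i\le n-1$ has degree $n$ (the $n-1$ edges inside $K_n$ plus the single edge to $v_0$), every $v_j$ with $1\le j\le n-1$ has degree $n$ symmetrically, and $u_0,v_0$ each have degree $2n-1$ (the $n-1$ edges inside their own $K_n$, the $n-1$ glue edges to the other side's non-hub vertices, plus the edge $(u_0,v_0)$). Since $d(x,y)=1$ on every edge, I only need $W(m_x,m_y)$ in each case.

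For part (1), the edge $(u_1,u_2)$: both measures are uniform on the $n$-element neighborhoods $\Gamma(u_1)=\{u_0,u_2,u_3,\dots,u_{n-1},v_0\}$ and $\Gamma(u_2)=\{u_0,u_1,u_3,\dots,u_{n-1},v_0\}$, each with weight $1/n$. These neighborhoods agree except that $m_{u_1}$ puts mass $1/n$ on $u_2$ while $m_{u_2}$ puts mass $1/n$ on $u_1$; all common mass can be left in place at zero cost, and the excess $1/n$ at $u_2$ must be transported to $u_1$, which are at distance $1$. Hence $W=1/n$ and $\kappa(u_1,u_2)=1-1/n=(n-1)/n$. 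I would justify optimality of this coupling either by exhibiting the $1$-Lipschitz function $f$ via Proposition \ref{kantoro} with $f(u_1)=0$, $f(u_2)=1$, $f=$ suitable constant elsewhere, or simply by noting that any transport plan must move at least the mass mismatch.

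For parts (2) and (3), the measures are spread over both copies of $K_n$, so the bookkeeping is heavier. For $\kappa(u_0,v_0)$: $m_{u_0}$ is uniform weight $1/(2n-1)$ on $\Gamma(u_0)=\{u_1,\dots,u_{n-1}\}\cup\{v_1,\dots,v_{n-1}\}\cup\{v_0\}$, and $m_{v_0}$ symmetrically on $\{v_1,\dots,v_{n-1}\}\cup\{u_1,\dots,u_{n-1}\}\cup\{u_0\}$. The two supports share the $2(n-1)$ vertices $u_1,\dots,u_{n-1},v_1,\dots,v_{n-1}$ (leave that mass fixed, cost $0$), and differ only in that $m_{u_0}$ has mass $1/(2n-1)$ at $v_0$ while $m_{v_0}$ has mass $1/(2n-1)$ at $u_0$; since $d(u_0,v_0)=1$ the transport cost is $1/(2n-1)$, giving $\kappa(u_0,v_0)=1-1/(2n-1)=(2n-2)/(2n-1)$. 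For $\kappa(u_0,v_1)$: here $d_{u_0}=2n-1$ and $d_{v_1}=n$ with $\Gamma(v_1)=\{v_0,v_2,\dots,v_{n-1},u_0\}$; the support overlap is only the vertices $v_2,\dots,v_{n-1}$ (and one must be careful about whether $u_0\in\Gamma(u_0)$ — it is not, whereas $u_0\in\Gamma(v_1)$), so most of the $m_{u_0}$-mass on the $u$-side and the $v_0$ vertex must be matched against the remaining $m_{v_1}$-mass, with distances computed in the gluing graph (which are all $1$ or $2$ — e.g.\ $d(u_i,v_j)=2$ via $u_0$ or $v_0$, $d(u_i,v_0)=1$). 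I would set up the explicit optimal coupling moving the surplus mass along shortest paths and then confirm it with a matching $1$-Lipschitz function through Proposition \ref{kantoro}.

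The main obstacle is part (3): the two measures have very different total "shape" (one sits mostly across the glue, the other mostly inside $K'_n$), the common support is small, and several blocks of mass travel distance $2$, so it is easy to build a suboptimal coupling. The crux is to show the coupling I write down is actually optimal; for this I would produce an explicit $1$-Lipschitz witness function $f$ (taking a few distinct values on the blocks $\{u_0\}$, $\{u_1,\dots,u_{n-1}\}$, $\{v_0\}$, $\{v_1\}$, $\{v_2,\dots,v_{n-1}\}$) whose Kantorovich–Rubinstein value equals the transport cost of the coupling, so that $W(m_{u_0},m_{v_1})$ is pinned from both sides and the stated value $\kappa(u_0,v_1)=(3n-2)/(n(2n-1))$ follows. (One should also note in passing that the Jost–Liu bounds of Theorem \ref{Jost-Liuの評価} and Theorem \ref{Jost-Liuの評価２} are not tight here, which is exactly why the direct computation is needed — cf.\ the remarks referenced in the introduction.)
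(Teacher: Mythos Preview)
Your plan is sound and, for part~(3), coincides with the paper's own argument: bound $W(m_{u_0},m_{v_1})$ from above by an explicit coupling and from below by an explicit $1$-Lipschitz function via Proposition~\ref{kantoro}, then observe the two bounds agree. The paper's coupling $B_3$ and function $f$ are exactly instances of the block-constant transport plan and block-constant witness you describe.

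For parts~(1) and~(2) there is a minor methodological difference. The paper does not carry out your direct transport argument; instead it notes that the Jost--Liu lower bound (Theorem~\ref{Jost-Liuの評価}) and upper bound (Theorem~\ref{Jost-Liuの評価２}) happen to coincide for these two edges, pinning $\kappa$ immediately. Your hands-on computation---leave the common mass fixed, move the single mismatched atom distance~$1$---is of course equivalent and arguably more transparent, since it is precisely the situation in which those Jost--Liu bounds become equalities.

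One small slip to fix before you write out~(3) in detail: the common support $\Gamma(u_0)\cap\Gamma(v_1)$ is $\{v_0,v_2,\dots,v_{n-1}\}$, not just $\{v_2,\dots,v_{n-1}\}$, because $v_0$ is adjacent both to $u_0$ (a glue edge) and to $v_1$ (inside $K'_n$). Getting this $(n-1)$-vertex overlap right is what makes the numbers work out to $W=(2n-2)\bigl(\tfrac{1}{n}-\tfrac{1}{2n-1}\bigr)$.
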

  \begin{figure}[h]
     \label{glue} 
     \begin{center} 
     \includegraphics[scale=0.34]{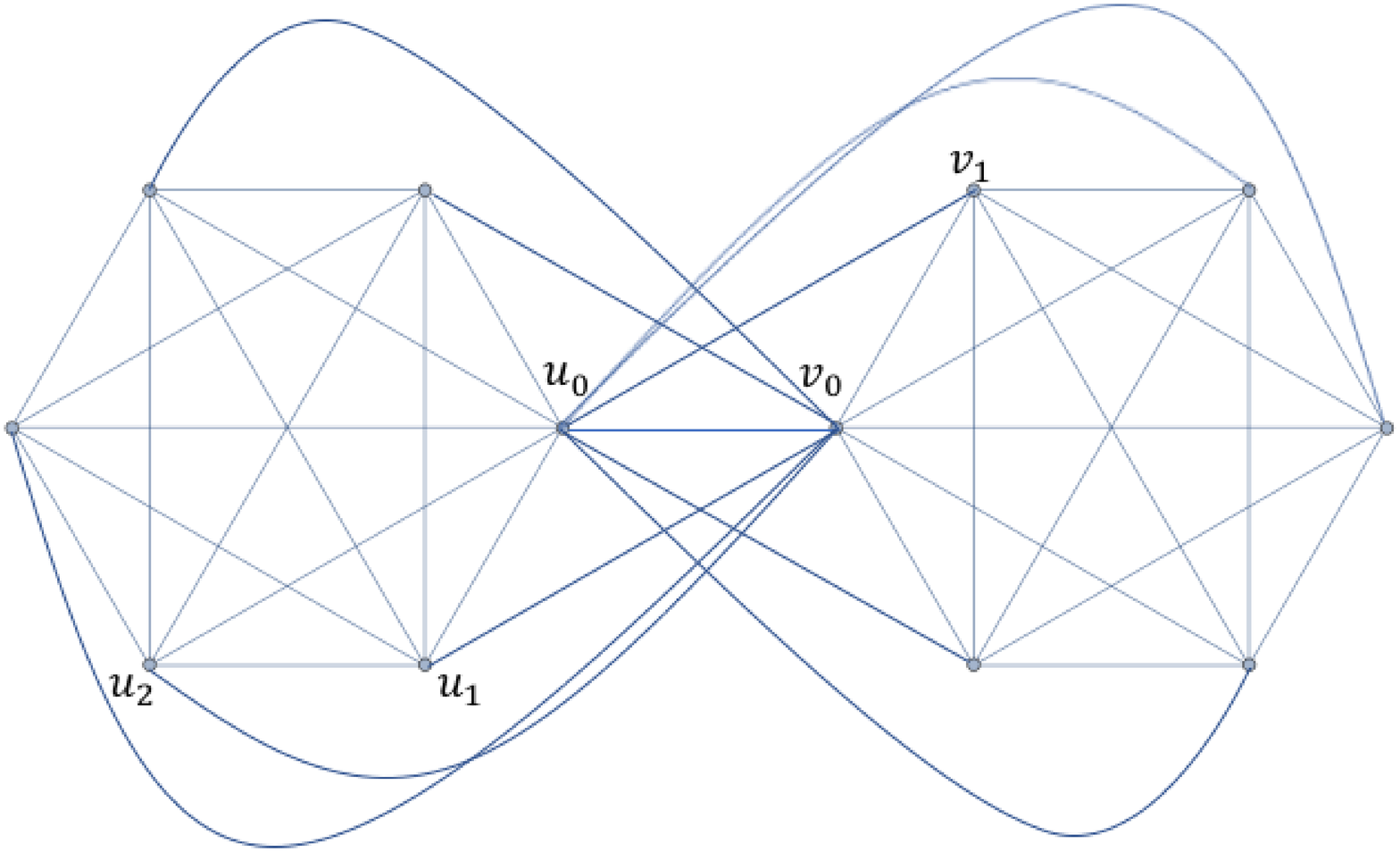}
     \caption{$K_{6} +_{5} K_{6}$}
     \end{center}
    \end{figure}
\begin{proof}
Given that (1) and (2) are held by using Theorem \ref{Jost-Liuの評価} and Theorem \ref{Jost-Liuの評価２}, it is sufficient to prove only (3). To calculate the Ricci curvature on $(u_0, v_1)$, we define a map $B_3 : V \times V \to \mathbb{R}$ by
\begin{eqnarray*}
B_3 (x, y)=
\begin{cases}
\cfrac{1}{2n-1}, & \mathrm{if}\ x=y \in \Gamma(u_0)\cap \Gamma(v_1),\\
\cfrac{1}{(n-2)(2n-1)}, & \mathrm{if}\ x=v_1, y \in \Gamma_{n-1} (u_0) \setminus \left\{ v_1 \right\},\\
\cfrac{1}{n( n-1 )}, & \mathrm{if}\ x \in \Gamma_{n-1} (v_0), y=u_0,\\
\cfrac{1}{n-1} \left( \cfrac{1}{n} - \cfrac{1}{2n-1} \right), & \mathrm{if}\ x \in \Gamma_{n-1} (v_0), y = v_0,\\
\cfrac{1}{n-1} \left\{ \left( \cfrac{1}{n} - \cfrac{1}{2n-1} \right) - \cfrac{1}{(n-2)(2n-1)} \right\}, & \mathrm{if}\ x \in \Gamma_{n-1} (v_0), y \in \Gamma_{n-1} (u_0) \setminus \left\{ v_1 \right\}, \\
0, & \mathrm{otherwise}.
\end{cases}
\end{eqnarray*}
This map is a coupling between $m_{u_0}$ and $m_{v_1}$. In fact, we have
\begin{eqnarray*}
\sum_{x \in V} B_3 (x, v_j) &=& \cfrac{1}{2n-1} + \cfrac{1}{(n-2)(2n-1)}+ \left\{ \left( \cfrac{1}{n} - \cfrac{1}{2n-1} \right) - \cfrac{1}{(n-2)(2n-1)} \right\}\\ 
&=& \cfrac{1}{n},
\end{eqnarray*}
\begin{eqnarray*}
\sum_{y \in V} B_3 (u_i, y) &=& \cfrac{1}{n( n-1 )}+ \cfrac{1}{n-1} \left( \cfrac{1}{n} - \cfrac{1}{2n-1} \right)\\
&+& \cfrac{n-2}{n-1} \left\{ \left( \cfrac{1}{n} - \cfrac{1}{2n-1} \right) - \cfrac{1}{(n-2)(2n-1)} \right\} \\
&=& \cfrac{1}{2n-1}.
\end{eqnarray*}
The others are obvious. Then, the 1-Wasserstein distance is estimated as follows:
\begin{eqnarray*}
W(m_{u_0}, m_{v_1}) &\leq& \cfrac{1}{n} +  \left( \cfrac{1}{n} - \cfrac{1}{2n-1} \right) +  \cfrac{1}{2n-1} \\
&+& (n-2)\left\{ \left( \cfrac{1}{n} - \cfrac{1}{2n-1} \right) - \cfrac{1}{(n-2)(2n-1)} \right\}\\
&=& \cfrac{2n-2}{n} -\cfrac{2n-2}{2n-1}.
\end{eqnarray*}
On the other hand, we define a function $f : V \to \mathbb{R}$ by
\begin{eqnarray*}
f(w)=
\begin{cases}
-2, & \mathrm{if}\ w \in \Gamma_{n-1} (u_0) \setminus \left\{ v_1 \right\},\\
-1, & \mathrm{if}\ w \in \left\{u_0, v_1, v_0 \right\},\\
0, & \mathrm{otherwise}.
\end{cases}
\end{eqnarray*}
It is easy to show that $f$ is a 1-Lipschitz function. By Proposition \ref{kantoro}, the 1-Wasserstein distance is estimated as follows:
\begin{eqnarray*}
W(m_{u_0}, m_{v_1}) &\geq& (n - 2) \left\{ 2 \times \left( \cfrac{1}{n} - \cfrac{1}{2n-1} \right) \right\} + \cfrac{1}{n} - \cfrac{1}{2n-1} + \left( \cfrac{1}{n} - \cfrac{1}{2n-1} \right) \\
&=& (2n-2) \left( \cfrac{1}{n} - \cfrac{1}{2n-1} \right).
\end{eqnarray*}
Thus we obtain 
\begin{eqnarray*}
W(m_{u_0}, m_{v_1}) = (2n-2) \left( \cfrac{1}{n} - \cfrac{1}{2n-1} \right) = \cfrac{2 (n-1)^2}{n(2n-1)},
\end{eqnarray*}
which implies $\kappa(u_0, v_1) = (3n - 2)/\left\{n(2n-1)\right\}$. This completes the proof.
\end{proof}

For the edge $(u_0, u_1)$, we calculate the Ricci curvature for $m \leq n-1$.
\begin{proposition}
\label{３番目}
For any vertex $u \in \Gamma_m (v_0)$, we have
\begin{eqnarray*}
\kappa(u_0, u) = 
\begin{cases}
\cfrac{n-1}{n+1}, & \mathrm{if}\ m=1,\\
\cfrac{n^{2} - mn + 2m}{n (n+m)}, & \mathrm{if}\ m \geq 2.
\end{cases}
\end{eqnarray*}
\end{proposition}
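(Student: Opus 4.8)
The plan is to reduce the statement to the evaluation of a single Wasserstein distance and then bound that quantity from above by an explicit coupling and from below by an explicit $1$-Lipschitz function.

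First I would use the symmetry of $K_n +_m K'_n$ among the vertices $u_1, \ldots, u_m$: it suffices to treat $u = u_1$. Since $(u_0, u_1) \in E(K_n)$ we have $d(u_0, u_1) = 1$, so Definition \ref{Ricci} gives $\kappa(u_0, u_1) = 1 - W(m_{u_0}, m_{u_1})$, and the problem becomes the computation of $W(m_{u_0}, m_{u_1})$. I would record $d_{u_0} = n+m$ and $d_{u_1} = n$, write $m_{u_0}$ and $m_{u_1}$ explicitly, and observe: the common support of the two measures is $\{u_2, \ldots, u_{n-1}, v_0\}$; $m_{u_0}$ carries the extra mass $\tfrac{1}{n+m}$ on each of $u_1, v_1, \ldots, v_m$ while $m_{u_1}$ carries the extra mass $\tfrac{1}{n}$ on $u_0$; and on each common vertex $m_{u_1}$ exceeds $m_{u_0}$ by $\tfrac{m}{n(n+m)}$. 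I would also tabulate the distances in $K_n +_m K'_n$ that enter: everything inside $K_n$ or inside $K'_n$ is at distance $1$, $d(v_i,u_0)=d(v_i,v_0)=1$ for $1\le i\le m$, and $d(v_i,u_j)=2$ for $1\le i\le m$ and $1\le j\le n-1$ (no cross edge $(v_i,u_j)$ exists, but $u_0$ joins them).

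For the upper bound I would exhibit a coupling. The guiding principle is that $u_1$ is the only vertex of $\operatorname{supp} m_{u_0}$ adjacent to all the ``deficit'' vertices $u_2, \ldots, u_{n-1}$, so its mass should be spent on them, while the surplus on $v_1, \ldots, v_m$ is shipped to $u_0$ and $v_0$ at distance $1$. When $m = 1$ the deficit $n-2$ at $u_2,\ldots,u_{n-1}$ does not exceed the mass available at $u_1$, so every unit of mass can be moved a distance $1$, giving $W(m_{u_0},m_{u_1}) \le \tfrac{2}{n+1}$. When $m \ge 2$ and $n \ge 5$ we have $(n-2)m > n$, so $u_1$ cannot cover all of $u_2, \ldots, u_{n-1}$, and the leftover deficit $(n-2)m - n$ there must be supplied from the $v_i$'s at distance $2$; summing mass times distance yields $W(m_{u_0},m_{u_1}) \le \tfrac{2m(n-1)}{n(n+m)}$. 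For the matching lower bound I would apply Proposition \ref{kantoro} with an explicit $1$-Lipschitz $f$: for $m=1$, $f(u_1)=f(v_1)=1$ and $f\equiv 0$ elsewhere gives $\sum_w f(w)(m_{u_0}(w)-m_{u_1}(w)) = \tfrac{2}{n+1}$; for $m \ge 2$, the choice forced by complementary slackness, namely $f(v_1)=\cdots=f(v_m)=2$, $f(u_0)=f(u_1)=f(v_0)=1$, $f(u_2)=\cdots=f(u_{n-1})=0$, and any admissible value (e.g.\ $2$) on the remaining $v_j$, is $1$-Lipschitz and gives $\sum_w f(w)(m_{u_0}(w)-m_{u_1}(w)) = \tfrac{2m(n-1)}{n(n+m)}$. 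Comparing with the upper bounds pins down $W(m_{u_0},m_{u_1})$, and $\kappa(u_0,u_1)=1-W(m_{u_0},m_{u_1})$ then equals $\tfrac{n-1}{n+1}$ for $m=1$ and $\tfrac{n^2-mn+2m}{n(n+m)}$ for $m\ge 2$.

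The main obstacle I expect is the optimal coupling: one has to recognize $u_1$ as a genuine bottleneck, see that for $m \ge 2$ a precise amount of mass is \emph{forced} to travel distance $2$, and confirm that no cheaper routing exists --- which is exactly what the dual $1$-Lipschitz function certifies. Making the two regimes $m=1$ and $m\ge 2$ match, and checking the Lipschitz condition against all the distance-$2$ (and, for $j>m$, distance-$3$) pairs in $K_n +_m K'_n$, is the bookkeeping that needs care.
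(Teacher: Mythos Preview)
Your proposal is correct and follows essentially the same route as the paper: exhibit an explicit coupling for the upper bound on $W(m_{u_0},m_{u_1})$ and an explicit $1$-Lipschitz function for the matching lower bound via Kantorovich duality; your test function for $m\ge 2$ is the paper's $f_3$ up to the harmless transformation $f\mapsto 2-f$. The only difference is that for $m=1$ the paper simply invokes the Jost--Liu bounds (Theorems~\ref{Jost-Liuの評価} and~\ref{Jost-Liuの評価２}), which already pin down $\kappa(u_0,u_1)=\tfrac{n-1}{n+1}$, whereas you reprove this case by hand with a coupling and the function $f=\mathbf{1}_{\{u_1,v_1\}}$; both are fine.
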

\begin{proof}
We take the vertex $u_1 \in \Gamma_m (v_0)$, and consider the Ricci curvature of $(u_0, u_1)$. 
Given that the case of $m=1$ is held by using Theorem \ref{Jost-Liuの評価} and \ref{Jost-Liuの評価２}, we consider the case of $m \geq 2$. We define a map $A : V \times V \to \mathbb{R}$ by
\begin{eqnarray*}
A (x, y)=
\begin{cases}
\cfrac{1}{n+m}, & \mathrm{if}\ x=y \in \Gamma(u_0)\cap \Gamma(u_1),\\
\cfrac{1}{nm}, & \mathrm{if}\ x \in \Gamma_m (u_0), y=u_0,\\
\cfrac{1}{m} \left( \cfrac{1}{n} - \cfrac{1}{n+m} \right), & \mathrm{if}\ x \in \Gamma_m (u_0), y=v_0,\\
\cfrac{1}{(n-2)(n+m)}, & \mathrm{if}\ x=u_1, y \in V(K_n) \setminus \left\{ u_0, u_1 \right\},\\
\cfrac{1}{m} \left\{ \left( \cfrac{1}{n} - \cfrac{1}{n+m} \right) - \cfrac{1}{(n-2)(n+m)} \right\}, & \mathrm{if}\ x \in \Gamma_m (u_0), y \in V(K_n) \setminus \left\{ u_0, u_1 \right\}, \\
0, & \mathrm{otherwise}.
\end{cases}
\end{eqnarray*}
This map is a coupling between $m_{u_0}$ and $m_{u_1}$. Then, the 1-Wasserstein distance is estimated as follows:
\begin{eqnarray*}
W(m_{u_0}, m_{u_1}) &\leq& \cfrac{1}{n} + \left( \cfrac{1}{n} - \cfrac{1}{n+m} \right) + \cfrac{1}{n+m} \\
&+& 2(n-2)  \left\{ \left( \cfrac{1}{n} - \cfrac{1}{n+m} \right) - \cfrac{1}{(n-2)(n+m)} \right\}\\
&=& \cfrac{2n-2}{n} -\cfrac{2n-2}{n+m} = 1 - \cfrac{n^{2} - mn + 2m}{n (n+m)} .
\end{eqnarray*}
On the other hand, we define a function $f_3$ by
\begin{eqnarray*}
f_3(w)=
\begin{cases}
-2, & \mathrm{if}\ w \in V(K_n) \setminus \left\{ u_0, u_1 \right\},\\
-1, & \mathrm{if}\ w \in \left\{ u_0, u_1, v_0 \right\},\\
0, & \mathrm{otherwise}.
\end{cases}
\end{eqnarray*}
It is easy to show that $f_3$ is a 1-Lipschitz function. By using Proposition \ref{kantoro}, the 1-Wasserstein distance is estimated as follows:
\begin{eqnarray*}
W(m_{u_0}, m_{u_1}) &\geq& -2(n - 2) \left(\cfrac{1}{n+m} - \cfrac{1}{n} \right) + \cfrac{1}{n} - \cfrac{1}{n+m} + \left( \cfrac{1}{n} - \cfrac{1}{n+m} \right) \\
&=& (2n-2) \left( \cfrac{1}{n} - \cfrac{1}{n+m} \right).
\end{eqnarray*}
This completes the proof.
\end{proof}
\begin{remark}
\label{example1}
In the case of $m \geq 2$, if we use Theorem \ref{Jost-Liuの評価}, then we obtain
\begin{eqnarray*}
\kappa(u_0, u) \geq \cfrac{n^2 - nm + m}{n(n+m)},
\end{eqnarray*}
for any vertex $u \in \Gamma_m (v_0)$. Therefore, Proposition \ref{３番目} is stronger than Theorem \ref{Jost-Liuの評価}.
\end{remark}

\subsection{Proof of Theorem \ref{主結果} in the case of $\bf m \leq n-2$}
In the case of $m \leq n-2$, we must calculate the Ricci curvature for 7 edges on $K_{n} +_{m} K'_{n}$ using the symmetry of the $m$-gluing graph (see Figure \ref{case}). In Figure \ref{case}, the Ricci curvature for the edges, as in (3), has already be calculated by using Proposition \ref{３番目}. Combining Proposition \ref{３番目}, \ref{１番目}, \ref{２番目}, \ref{４番目}, and Corollary \ref{５から７番目}, we obtain Theorem \ref{主結果}.
  \begin{figure}[h]
     \begin{center} 
     \includegraphics[scale=0.34]{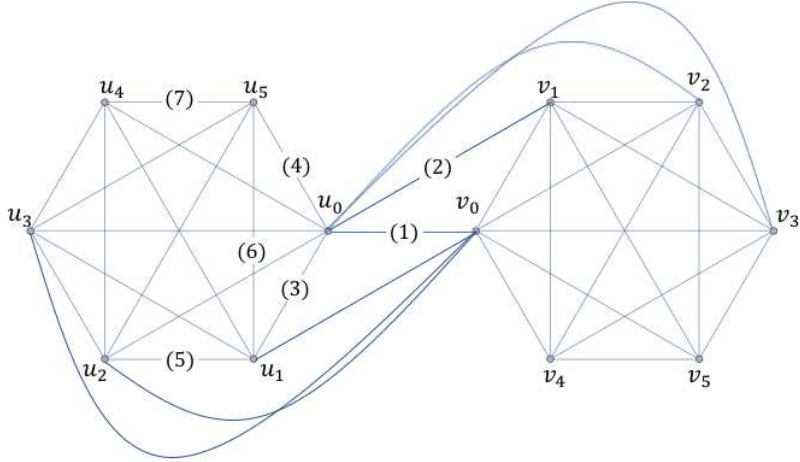}
     \caption{7 patterns in $K_{6} +_{3} K_{6}$}
     \label{case} 
     \end{center}
    \end{figure} 

For the edge (1) in Figure \ref{case}, we obtain the following proposition.
\begin{proposition}
\label{１番目}
For vertices $u_0$ and $v_0$, we have
\begin{eqnarray*}
\kappa(u_0, v_0) = \cfrac{4m-2n+4}{n+m}.
\end{eqnarray*}
In particular, $\kappa(u_0, v_0) \geq 0$ if and only if $m \geq (n-2)/2$.
\end{proposition}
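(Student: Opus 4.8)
The plan is to compute $W(m_{u_0},m_{v_0})$ exactly; since $d(u_0,v_0)=1$ we then get $\kappa(u_0,v_0)=1-W(m_{u_0},m_{v_0})$ and read off the formula. Both $u_0$ and $v_0$ have degree $n+m$, so $m_{u_0}$ and $m_{v_0}$ are uniform with weight $1/(n+m)$ on $\Gamma(u_0)$ and $\Gamma(v_0)$. First I would record the combinatorics: the common neighbours are $\Gamma(u_0)\cap\Gamma(v_0)=\{u_1,\dots,u_m\}\cup\{v_1,\dots,v_m\}$ (same weight in both measures), the excess mass of $m_{u_0}$ sits on $\{u_{m+1},\dots,u_{n-1}\}\cup\{v_0\}$, and the excess of $m_{v_0}$ sits on $\{v_{m+1},\dots,v_{n-1}\}\cup\{u_0\}$. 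The only relevant distances are $d(u_i,u_0)=d(v_0,v_j)=1$ and, crucially, $d(u_i,v_j)=3$ for $i,j>m$: such $u_i,v_j$ have all their neighbours inside $K_n$, resp.\ $K'_n$, so they share no neighbour, while the path $u_iu_0v_0v_j$ realizes distance $3$. This is exactly where the hypothesis $m\le n-2$ enters: for $m=n-1$ there is no such pair and the value is different, which is why that case was treated separately.

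For the upper bound I would exhibit a coupling $A$ between $m_{u_0}$ and $m_{v_0}$ that leaves every common neighbour fixed, moves the weight at $u_{m+1}$ to $u_0$ (cost $1$), moves the weight at $v_0$ to $v_{m+1}$ (cost $1$), and moves the weight at $u_i$ to $v_i$ for $m+2\le i\le n-1$ (cost $3$ each, $n-m-2$ pairs). Checking the marginal conditions (\ref{coupling}) is routine, and $A$ gives
\begin{eqnarray*}
W(m_{u_0},m_{v_0}) &\le& \frac{1}{n+m}\bigl(1+1+3(n-m-2)\bigr) \;=\; \frac{3n-3m-4}{n+m}.
\end{eqnarray*}
The subtlety is that the naive plan ``send $v_0\to u_0$ at cost $1$, then $u_i\to v_j$ at cost $3$'' is \emph{not} optimal: replacing the $v_0$--$u_0$ pairing by the two moves $u_{m+1}\to u_0$ and $v_0\to v_{m+1}$ saves $2/(n+m)$, and spotting this saving is the main point.

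For the matching lower bound I would apply the Kantorovich--Rubinstein duality (Proposition \ref{kantoro}) with the function $f$ defined by $f\equiv 3$ on $\{u_{m+1},\dots,u_{n-1}\}$, $f\equiv 2$ on $\{u_0,u_1,\dots,u_m\}$, $f\equiv 1$ on $\{v_0,v_1,\dots,v_m\}$, and $f\equiv 0$ on $\{v_{m+1},\dots,v_{n-1}\}$. One checks $|f(x)-f(y)|\le 1$ on every edge (the only edges between the two cliques join a value-$2$ vertex to a value-$1$ vertex, and the edges $u_0u_i$ and $v_0v_j$ with $i,j>m$ have endpoint difference exactly $1$), so $f$ is $1$-Lipschitz. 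Evaluating $\sum_x f(x)\bigl(m_{u_0}(x)-m_{v_0}(x)\bigr)$ yields $\tfrac{1}{n+m}\bigl((3n-2)-(3m+2)\bigr)=\tfrac{3n-3m-4}{n+m}$, matching the upper bound. Therefore $\kappa(u_0,v_0)=1-\tfrac{3n-3m-4}{n+m}=\tfrac{4m-2n+4}{n+m}$, and the final assertion follows since $4m-2n+4\ge 0$ is equivalent to $m\ge(n-2)/2$. I expect the only genuine obstacle to be identifying the optimal coupling above; the rest is bookkeeping.
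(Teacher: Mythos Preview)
Your proof is correct. The $1$-Lipschitz function you use for the lower bound is exactly the paper's function $f_1$, so that half of the argument is identical. The difference lies in the upper bound: the paper simply invokes the Jost--Liu estimate (Theorem~\ref{Jost-Liuの評価}), noting that here $d_{u_0}=d_{v_0}=n+m$ and $\#(u_0,v_0)=2m$, which immediately yields $W(m_{u_0},m_{v_0})\le (3n-3m-4)/(n+m)$, whereas you construct an explicit coupling. Your route is more self-contained and makes transparent \emph{why} the bound is what it is (in particular your observation about routing $u_{m+1}\to u_0$ and $v_0\to v_{m+1}$ rather than $v_0\to u_0$ is precisely the mechanism hidden inside the Jost--Liu proof); the paper's route is shorter since it reuses a general tool already stated. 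Either way the result is the same.
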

\begin{proof}
By Theorem \ref{Jost-Liuの評価}, we have
\begin{eqnarray*}
W(m_{u_0}, m_{v_0}) \leq \cfrac{3n-3m-4}{n+m} .
\end{eqnarray*}
On the other hand, we define a map $f_1 : V \to \mathbb{R}$ by
\begin{eqnarray*}
f_1 (w)=
\begin{cases}
3, & \mathrm{if}\ w \in V(K_n) \setminus( \Gamma_m (v_0) \left\{ u_0 \right\} ),\\
2, & \mathrm{if}\ w \in \left\{u_0 \right\} \cup \Gamma_m (v_0),\\
1, & \mathrm{if}\ w \in \Gamma_m (u_0) \cup \left\{ v_0 \right\},\\
0, & \mathrm{otherwise}.
\end{cases}
\end{eqnarray*}
Since $f_1$ is a 1-Lipschitz function, we have
\begin{eqnarray*}
W(m_{u_1}, m_{v_0}) \geq  \cfrac{3}{n+m} (n-m-1) + 2 \left( - \cfrac{1}{n+m} \right) + \cfrac{1}{n+m}= \cfrac{3n-3m-4}{n+m}.
\end{eqnarray*}
Thus we obtain
\begin{eqnarray*}
W(m_{u_1}, m_{v_0}) = \cfrac{3n-3m-4}{n+m}= 1- \cfrac{4m-2n+4}{n+m}.
\end{eqnarray*}
This completes the proof.
\end{proof}
\begin{remark}
The 1-Lipschitz function defined in the proof of Theorem \ref{Jost-Liuの評価} to obtain the upper bound of the Ricci curvature cannot be applied in this case (see \cite{Jo2}); hence, we define the original 1-Lipschitz function $f_1$.
\end{remark}

For the edge (2) in Figure \ref{case}, we obtain the following proposition:
\begin{proposition}
\label{２番目}
For any vertex $v \in \Gamma_m (u_0)$, we have
\begin{eqnarray*}
\kappa(u_0, v) = 
\begin{cases}
\cfrac{(2+n)m + 2n - n^2}{n(n+m)}, & \mathrm{if}\ m \geq \cfrac{-n + \sqrt{5n^2 -8n}}{2},\\
\cfrac{m^2 + 2(n+1)m - 2n^2 +4n}{n(n+m)}, & \mathrm{if}\ m < \cfrac{-n + \sqrt{5n^2 -8n}}{2}.
\end{cases}
\end{eqnarray*}
In particular, $\kappa(u_0,v) > 0$ if and only if $m > (n^2 -2n)/(n+2)$.
\end{proposition}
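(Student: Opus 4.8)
The plan is to compute $\kappa(u_0, v)$ for a vertex $v \in \Gamma_m(u_0)$ directly from the definition, by sandwiching the $1$-Wasserstein distance $W(m_{u_0}, m_v)$ between matching upper and lower bounds, exactly in the style of the proof of Proposition \ref{３番目}. First I would record the relevant local data: $d_{u_0} = n + m$ (the $n-1$ neighbors inside $K_n$, the vertex $v_0$, and the $m$ vertices $v_1,\dots,v_m$), while $d_v = n$ for $v = v_i$ with $1 \le i \le m$ (its $n-1$ neighbors in $K'_n$ together with $u_0$); note that $u_0$ and $v$ share the common neighbors $\Gamma(u_0)\cap\Gamma(v) = \{v_0\} \cup (\Gamma_m(u_0)\setminus\{v\})$, so the amount of mass that must actually be transported is small. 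The key point that makes this case genuinely different from $\kappa(u_0,u_1)$ is that here the two ``excess'' pieces of mass — the $K_n$-side neighbors of $u_0$ (at distance up to $3$ from the $K'_n$-side) and the $K'_n$-side neighbors of $v$ — are far apart, and the optimal transport plan splits into two regimes depending on whether it is cheaper to move the $K'_n$ mass all the way back through $u_0$ or to route it more locally; this threshold is exactly where $m = (-n + \sqrt{5n^2 - 8n})/2$ appears.

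Next I would exhibit an explicit coupling $A\colon V\times V \to [0,1]$ between $m_{u_0}$ and $m_v$: leave the common-neighbor mass in place (contributing $0$ to the cost), and then describe how to ship the remaining $\tfrac{1}{n+m}$-sized atoms of $m_{u_0}$ sitting on $V(K_n)\setminus\{u_0\}$ to the atoms of $m_v$ sitting on $(V(K'_n)\setminus\{v\}) \cup \{u_0\}$, choosing the routing (either ``via $u_0$'' or ``the long way'') according to the sign of $m - \tfrac{-n+\sqrt{5n^2-8n}}{2}$; summing $A(x,y)\,d(x,y)$ gives the claimed upper bound for $W(m_{u_0}, m_v)$ in each of the two cases. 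For the matching lower bound I would write down a $1$-Lipschitz function $f$ adapted to each regime — in the ``far'' regime something like the function $f_3$ from Proposition \ref{３番目} assigning values $0,-1,-2$ according to distance from the $K_n$-side, and in the ``near'' regime a function taking values on $\{0,1,2,3\}$ (as in $f_1$ of Proposition \ref{１番目}) to capture the three-step separation — and invoke Proposition \ref{kantoro} to get $W(m_{u_0},m_v) \ge \sum_u f(u)(m_{u_0}(u) - m_v(u))$. Matching the two bounds yields the stated formula for $\kappa(u_0,v) = 1 - W(m_{u_0},m_v)/d(u_0,v)$, and the final ``in particular'' claim follows by checking that in the first regime the inequality $(2+n)m + 2n - n^2 > 0$ is equivalent to $m > (n^2-2n)/(n+2)$, and that this threshold lies within the first regime (i.e. $(n^2-2n)/(n+2) \ge (-n+\sqrt{5n^2-8n})/2$ for $n \ge 5$), so the second regime contributes no additional positivity range; alternatively one verifies directly that $m^2 + 2(n+1)m - 2n^2 + 4n > 0$ never happens for the relevant small $m$.

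The main obstacle I anticipate is not the Lipschitz lower bound — those functions are easy to guess and check — but rather producing a coupling that is genuinely optimal in each regime, i.e. making sure the upper bound actually equals the lower bound rather than merely bounding it. Getting the two to agree forces one to identify the correct crossover value of $m$ and to split the mass of the $\Gamma_m(u_0)$ atoms in precisely the right proportions between the two possible destinations; this bookkeeping is where the somewhat unusual coefficients (the $\tfrac{1}{(n-2)(n+m)}$-type terms and the piecewise thresholds) come from, and it is the step most prone to arithmetic slips. Once the coupling and the Lipschitz function produce the same number, the curvature formula and the positivity criterion are immediate algebra.
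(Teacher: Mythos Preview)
Your plan is correct and matches the paper's proof essentially line for line: an explicit coupling gives the upper bound on $W(m_{u_0},m_{v_1})$ and an explicit $1$-Lipschitz function (values in $\{0,1,2\}$ for the large-$m$ regime, values in $\{0,1,2,3\}$ for the small-$m$ regime) gives the matching lower bound via Kantorovich duality, with the positivity analysis handled exactly as you describe. The only wrinkle you do not mention is that the paper treats the boundary case $m=n-2$ with a separately defined coupling, but this is a minor bookkeeping detail rather than a new idea.
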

\begin{proof}
We take $v_{1} \in \Gamma_m (u_0)$, and consider the Ricci curvature of $(u_0, v_{1})$. If $m \geq (-n + \sqrt{5n^2 -4n})/2$, then we define a map $A_2: V \times V \to \mathbb{R}$ by
\begin{eqnarray*}
A_2 (x, y)
=
\begin{cases}
\cfrac{1}{n+m}, & \mathrm{if}\ x=y \in \Gamma(u_0) \cap \Gamma(v_{1}),\\
\cfrac{1}{(n-m-1)(n+m)}, & \mathrm{if}\ x=v_{1}, y \in V(K'_n) \setminus \Gamma_m (u_0),\\
\cfrac{1}{n(n-m-1)}, & \mathrm{if}\ x \in V(K_n) \setminus \Gamma_m (v_0), y = u_0,\\
\cfrac{1}{m-1} \left( \cfrac{1}{n+m} -\cfrac{1}{n(n-m-1)} \right), & \mathrm{if}\ x \in V(K_n) \setminus \Gamma_m (v_0),\\ &\ \ \  y \in  \Gamma_m (u_0) \setminus \left\{ v_1 \right\},\\
\cfrac{1}{m} \left( \cfrac{1}{n} - \cfrac{1}{n+m} \right), & \mathrm{if}\ x \in \Gamma_m (v_0), y= v_0,\\
\cfrac{1}{m}\left(\cfrac{1}{n}- \cfrac{1}{(n-m-1)(n+m)} \right), & \mathrm{if}\ x \in \Gamma_m (v_0), y \in V(K'_n) \setminus \Gamma_m (u_0),\\
\cfrac{1}{m} \left\{ \cfrac{1}{n} - \cfrac{1}{n+m} - \cfrac{1}{m-1} \left( \cfrac{n-m-1}{n+m} - \cfrac{1}{n} \right) \right\}, & \mathrm{if}\ x \in \Gamma_m (v_0),\ y \in \Gamma_m (u_0) \setminus \left\{ v_1 \right\},\\
0, & \mathrm{otherwise}.
\end{cases}
\end{eqnarray*}
Since $1/(n+m) \geq 1/n(n-m-1)$ is held for $n-3 \geq m$, we should assume that $n-3 \geq m$, and if $m \geq (-n + \sqrt{5n^2 -4n})/2$, we have $(n-m-1)/(n+m) - 1/n \leq (m-1)(1/n - 1/(n+m))$. So, $A_2$ is a coupling between $m_{u_0}$ and $m_{v_1}$. Then the 1-Wasserstein distance is estimated as follows:
\begin{eqnarray*}
W(m_{u_0}, m_{v_1}) &\leq& \cfrac{1}{n+m} + \cfrac{1}{n} + \left( \cfrac{1}{n} - \cfrac{1}{n+m}\right) + 2(n-m-1) \left\{ \cfrac{1}{n+m} -\cfrac{1}{n(n-m-1)} \right\}\\
&+& 2 \left( \cfrac{n-m-1}{n} - \cfrac{1}{n+m}  \right) + 2 \left\{(m-1) \left(\cfrac{1}{n} - \cfrac{1}{n+m} \right) - \left( \cfrac{n-m-1}{n+m} - \cfrac{1}{n} \right) \right\} \\
&=& \cfrac{2(n^2-n-m)}{n(n+m)}.
\end{eqnarray*}
In the case where $m=n-2$, we change the definition of $A_2$ to the following:
\begin{eqnarray*}
A_2 (x, y)=
\begin{cases}
\cfrac{1}{2n-2}, & \mathrm{if}\ x=v_{1},\ y =v_{n-1},\\
\cfrac{1}{2n-2}, & \mathrm{if}\ x=y \in \Gamma(u_0) \cap \Gamma(v_{1}),\\
\cfrac{1}{2n-2}, & \mathrm{if}\ x =u_{n-1},\ y = u_0,\\
\cfrac{1}{n-2} \left( \cfrac{1}{n} - \cfrac{1}{2n-2} \right), & \mathrm{if}\ x \in \Gamma_m (v_0),\ y \in \left\{u_0, v_0, v_{n-1} \right\} \cup \Gamma_m (u_0) \setminus \left\{ v_1 \right\},\\
0, & \mathrm{otherwise}.
\end{cases}
\end{eqnarray*}
Then the 1-Wasserstein distance is estimated as follows:
\begin{eqnarray*}
W(m_{u_0}, m_{v_1}) &\leq& \cfrac{1}{2n-2} + \cfrac{1}{2n-2} + \left\{ 2+ 2(n-2) \right\} \left( \cfrac{1}{n}- \cfrac{1}{2n-2} \right) =\cfrac{2(n^2 -2n +2)}{n(n-2)}.
\end{eqnarray*}
Thus, this coincides with the 1-Wasserstein distance in the case of $m \geq n-3$.\\

On the other hand, we define a map $f_2 : V \to \mathbb{R}$ by
\begin{eqnarray*}
f_2 (w)=
\begin{cases}
2, & \mathrm{if}\ w \in V(K_n) \setminus \left\{u_{0} \right\},\\
1, & \mathrm{if}\ w \in \left\{u_0, v_0, v_1 \right\},\\
0, & \mathrm{otherwise}.
\end{cases}
\end{eqnarray*}
Given that $f$ is a 1-Lipschitz function, then by the Kantorovich duality, we have the following:
\begin{eqnarray*}
W(m_{u_0}, m_{v_1}) \geq  \cfrac{2(n-1)}{n+m} - \cfrac{2}{n} + \cfrac{2}{n+m} = \cfrac{2(n^2-n-m)}{n(n+m)}.
\end{eqnarray*}
Thus we obtain
\begin{eqnarray*}
W(m_{u_0}, m_{v_1})  = \cfrac{2(n^2-n-m)}{n(n+m)}= 1-\cfrac{(2+n)m + 2n - n^2}{n(n+m)}.
\end{eqnarray*}

If $m < (-n + \sqrt{5n^2 -8n})/2$, then we define a map $A'_2: V \times V \to \mathbb{R}$ as follows:
\begin{eqnarray*}
A'_2 (x, y)=
\begin{cases}
\cfrac{1}{n+m}, & \mathrm{if}\ x=y \in \Gamma(u_0) \cap \Gamma(u_{n-1}),\\
\cfrac{1}{(n-m-1)(n+m)}, & \mathrm{if}\ x=v_{1}, y \in V(K'_n) \setminus \Gamma_m (u_0),\\
\cfrac{1}{n(n-m-1)}, & \mathrm{if}\ x \in V(K_n) \setminus \Gamma_m (v_0), y = u_0,\\
\cfrac{1}{n-m-1} \left( \cfrac{1}{n} - \cfrac{1}{n+m} \right), & \mathrm{if}\ x \in V(K_n) \setminus \Gamma_m (v_0),\\ &\ \ \ y \in \Gamma_m (u_0) \setminus \left\{ v_1 \right\},\\
\cfrac{1}{n-m-1} \left\{ \cfrac{1}{n+m} - \cfrac{1}{n-m-1} \left( \cfrac{m}{n} - \cfrac{m-1}{n+m} \right) \right\}, & \mathrm{if}\ x \in V(K_n) \setminus \Gamma_m (v_0),\\ &\ \ \  y \in V(K'_n) \setminus \Gamma_m (u_0),\\
\cfrac{1}{m} \left( \cfrac{1}{n} - \cfrac{1}{n+m} \right), & \mathrm{if}\ x \in \Gamma_m (v_0), y= v_0,\\
\cfrac{1}{n-m-1} \left\{ \cfrac{1}{n+m} - \cfrac{1}{m} \left( \cfrac{1}{n} - \cfrac{1}{n+m} \right) \right\}, & \mathrm{if}\ x \in \Gamma_m (v_0),\\ &\ \ \ y \in V(K'_n) \setminus \Gamma_m (u_0),\\
0, & \mathrm{otherwise}.
\end{cases}
\end{eqnarray*}
If $m < (-n + \sqrt{5n^2 -8n})/2$, then we have $(n-m-1)/n > m/(n+m) - (1/n - 1/(n+m))$. Therefore, $A'_2$ is a coupling between $m_{u_0}$ and $m_{v_1}$. Then the 1-Wasserstein distance is estimated as follows: 
\begin{eqnarray*}
W(m_{u_0}, m_{v_1}) &\leq& \cfrac{1}{n+m} +\cfrac{1}{n}+2(m-1) \left( \cfrac{1}{n} - \cfrac{1}{n+m} \right) \\
&+& 3(n-m-1) \left\{ \cfrac{1}{n+m} - \cfrac{1}{n-m-1} \left( \cfrac{m}{n} - \cfrac{m-1}{n+m} \right) \right\}\\
&+& \left( \cfrac{1}{n} - \cfrac{1}{n+m}\right) +2m \left\{ \cfrac{1}{n+m} - \cfrac{1}{m} \left( \cfrac{1}{n} - \cfrac{1}{n+m} \right) \right\} \\
&=& - \cfrac{m+2}{n} +\cfrac{3n-2}{n+m} = \cfrac{-m^2 - (2+n)m +3n^2 -4n}{n(n+m)}.
\end{eqnarray*}
On the other hand, we define a map $f'_2 : V \to \mathbb{R}$ by
\begin{eqnarray*}
f'_2 (w)=
\begin{cases}
3, & \mathrm{if}\ w \in V(K_n) \setminus (\left\{u_{0} \right\} \cup \Gamma_m (v_0)),\\
2, & \mathrm{if}\ w \in \left\{u_{0} \right\} \cup \Gamma_m (v_0),\\
1, & \mathrm{if}\ w \in \left\{v_{0} \right\} \cup \Gamma_m (u_0),\\
0, & \mathrm{otherwise}.
\end{cases}
\end{eqnarray*}
Given that $f'_2$ is a 1-Lipschitz function, then by the Kantorovich duality, we have
\begin{eqnarray*}
W(m_{u_0}, m_{v_1}) \geq  \cfrac{3(n-m-1)}{n+m} - \cfrac{2}{n} + \cfrac{2m}{n+m} + \cfrac{m+1}{n+m} - \cfrac{m}{n}= - \cfrac{m+2}{n} +\cfrac{3n-2}{n+m}.
\end{eqnarray*}
Thus we obtain
\begin{eqnarray*}
W(m_{u_0}, m_{v_1})  =  \cfrac{-m^2 - (2+n)m +3n^2 -4n}{n(n+m)} = 1- \cfrac{m^2 + 2(n+1)m - 2n^2 +4n}{n(n+m)}.
\end{eqnarray*}
This result means that $\kappa (u_0, v_1) \geq 0$ if and only if $m \geq -(n+1) + \sqrt{3n^2 - 2n +1}$. However, as we have $-(n+1) + \sqrt{3n^2 - 2n +1} \geq (-n + \sqrt{5n^2 -8n})/2$, there exists no $m$ with $\kappa (u_0, v_1) \geq 0$.
\end{proof}
\begin{remark}
\label{example2}
If we use Theorem \ref{Jost-Liuの評価}, then we obtain the following:
\begin{eqnarray*}
\kappa(u_0, v) \geq \cfrac{(m+2n)(m+2-n)}{n(n+m)},
\end{eqnarray*}
for any vertex $v \in \Gamma_m (u_0)$. 
\end{remark}

For the edge (4) in Figure \ref{case}, by using Theorem \ref{Jost-Liuの評価}, we obtain the following proposition:
\begin{proposition}
\label{４番目}
For any vertex $u \in V(K_n) \setminus \Gamma_m (v_0)$, we have
\begin{eqnarray*}
\cfrac{(2-n)m + n^2 -2n +2}{(n-1)(n+m)}  \geq \kappa(u_0, u) \geq 
\begin{cases}
\cfrac{(2-n)m +n^2 -3n +3}{(n-1)(n+m)}, & \mathrm{if}\ m < n-2,\\
\cfrac{1}{n-1}, & \mathrm{if}\ m=n-2.
\end{cases}
\end{eqnarray*}
In particular, $\kappa(u_0, u) \geq 0$ if and only if $m \leq n-2$.
\end{proposition}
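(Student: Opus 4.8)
The plan is to fix one representative edge and sandwich $W(m_{u_0},m_u)$ between the cost of an explicit coupling and the Kantorovich--Rubinstein dual bound, then read off $\kappa$. By the symmetry of $K_n+_mK'_n$ among $u_{m+1},\dots,u_{n-1}$ it is enough to treat $u=u_{n-1}$. Here $d_{u_0}=n+m$ (the $n-1$ edges inside $K_n$, the edge $(u_0,v_0)$, and the $m$ edges $(u_0,v_i)$), $d_{u_{n-1}}=n-1$, and the common neighbours of $u_0$ and $u_{n-1}$ are exactly $u_1,\dots,u_{n-2}$, so $\#(u_0,u_{n-1})=n-2$; moreover $m_{u_0}$ is uniform with weight $\tfrac1{n+m}$ on $\{u_1,\dots,u_{n-1},v_0,\dots,v_m\}$ and $m_{u_{n-1}}$ is uniform with weight $\tfrac1{n-1}$ on $\{u_0,u_1,\dots,u_{n-2}\}$.

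For the lower bound when $m<n-2$ I would substitute this data into Theorem~\ref{Jost-Liuの評価}. The first bracket (before truncation) equals $1-\tfrac1{n+m}-\tfrac1{n-1}-\tfrac{n-2}{n-1}=-\tfrac1{n+m}<0$, so that term vanishes, while the second equals $\tfrac{mn-2m-1}{(n-1)(n+m)}$, which is positive for $n\ge5$; the estimate then collapses exactly to $\kappa(u_0,u_{n-1})\ge\tfrac{(2-n)m+n^2-3n+3}{(n-1)(n+m)}$. At the endpoint $m=n-2$ this only yields $\tfrac1{2(n-1)}$, so there I would instead exhibit a coupling by hand: leave the weight $\tfrac1{2n-2}$ already sitting at each $u_i$ $(1\le i\le n-2)$ in place; transport the weights at $u_{n-1}$ and at $v_0$ to $u_1,\dots,u_{n-2}$ along edges; transport $\tfrac1{n-1}$ worth of the weight on $v_1,\dots,v_m$ to $u_0$ along the edges $(u_0,v_j)$ (exactly filling $u_0$'s deficit); and send the remaining $\tfrac{n-4}{2n-2}$ worth of that weight to $u_1,\dots,u_{n-2}$ at distance $2$. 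The two marginal conditions \eqref{coupling} are then immediate and the transport cost works out to $\tfrac{n-2}{n-1}$, giving $\kappa(u_0,u_{n-1})\ge\tfrac1{n-1}$.

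For the upper bound the naive Jost--Liu estimate $\kappa\le\tfrac{\#(x,y)}{d_x\vee d_y}=\tfrac{n-2}{n+m}$ is too weak once $m\ge2$, so I would instead apply Proposition~\ref{kantoro} to the function $f$ equal to $-1$ on $\{u_1,\dots,u_{n-2}\}$, to $+1$ on $\{v_1,\dots,v_m\}$, and to $0$ on all remaining vertices. That $f$ is $1$-Lipschitz is a quick check: the only edges across which $|f|$ changes are $(u_0,v_j)$, $(u_j,v_0)$ and $(u_i,u_{n-1})$, and on each the jump is exactly $1$. Duality then gives $W(m_{u_0},m_{u_{n-1}})\ge(n-2)\bigl(\tfrac1{n-1}-\tfrac1{n+m}\bigr)+\tfrac m{n+m}=\tfrac{2mn-3m+n-2}{(n-1)(n+m)}$, whence $\kappa(u_0,u_{n-1})\le\tfrac{(2-n)m+n^2-2n+2}{(n-1)(n+m)}$. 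The sign assertion is then a routine check on these two polynomials: both are positive for every $m$ with $1\le m\le n-2$ (the only range in which this edge occurs), and they coincide at $m=n-2$, so in fact $\kappa(u_0,u)=\tfrac1{n-1}$ there.

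The main obstacle is precisely the endpoint $m=n-2$: Theorem~\ref{Jost-Liuの評価} is not sharp there, so one must construct a genuinely optimal coupling, and the key realisation is that the cheap channel is the edge $(u_0,v_j)$ — one should route as much of the $K'_n$-side mass as the deficit at $u_0$ permits through it (distance $1$) rather than all the way to $u_1,\dots,u_{n-2}$ (distance $2$), letting $v_0$ and $u_{n-1}$, which already lie at distance $1$ from $u_1,\dots,u_{n-2}$, soak up the deficits there. Verifying that this coupling is admissible and that its cost is exactly $\tfrac{n-2}{n-1}$ is the only genuinely delicate computation; the generic lower bound is a direct substitution into Jost--Liu and the upper bound needs only the single $1$-Lipschitz function above.
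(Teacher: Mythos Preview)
Your argument follows the paper's line exactly---Theorem~\ref{Jost-Liuの評価} for the generic lower bound and a $1$-Lipschitz dual witness for the upper bound (your $f$ differs from the paper's $f_4$ only by a constant on the support of $m_{u_0}-m_{u_{n-1}}$, so it yields the identical estimate $W\ge\tfrac{2mn-3m+n-2}{(n-1)(n+m)}$). Where you depart from the paper you actually do more: the paper applies Jost--Liu uniformly in $m$ and stops, which at $m=n-2$ gives only $\kappa\ge\tfrac{1}{2(n-1)}$, short of the stated $\tfrac{1}{n-1}$. Your explicit coupling at that endpoint (channelling $\tfrac{1}{n-1}$ of the $v_j$-mass to $u_0$ at distance~$1$, using $u_{n-1}$ and $v_0$ to supply the $u_i$'s at distance~$1$, and paying distance~$2$ only on the residual $\tfrac{n-4}{2n-2}$) has cost exactly $\tfrac{n-2}{n-1}$ and thereby closes this gap, in fact pinning down $\kappa(u_0,u)=\tfrac{1}{n-1}$ there. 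One minor slip: your inventory of edges along which $f$ jumps omits $(u_0,u_i)$, $(v_0,v_j)$ and $(v_j,v_k)$ with $j\le m<k$, but all jumps are still~$1$, so the Lipschitz verification stands.
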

\begin{proof}
We take $u_{n-1} \in V(K_n) \setminus \Gamma_m (v_0)$, and consider the Ricci curvature of $(u_0, u_{n-1})$. Given that the following equalities hold: 
\begin{eqnarray*}
1-\cfrac{1}{n+m}-\cfrac{1}{n-1}-\cfrac{n-2}{n-1} &=& - \cfrac{1}{n+m} <0, \\
1-\cfrac{1}{n+m}-\cfrac{1}{n-1}-\cfrac{n-2}{n+m} &=& \cfrac{(n-2)m -1}{(n+m)(n-1)} >0,
\end{eqnarray*}
we use Theorem \ref{Jost-Liuの評価} and obtain 
\begin{eqnarray*}
\kappa(u_0, u_{n-1}) &\geq& - \cfrac{(n-2)m -1}{(n+m)(n-1)} + \cfrac{n-2}{n+m}\\
&=& \cfrac{(2-n)m +n^2 -3n +3}{(n-1)(n+m)}.
\end{eqnarray*}
On the other hand, we define a map $f_4 : V \to \mathbb{R}$ as follows:
\begin{eqnarray*}
f_4 (w)=
\begin{cases}
2, & \mathrm{if}\ w \in \Gamma_m (u_0),\\
1, & \mathrm{if}\ w \in \left\{u_{0}, v_0, u_{n-1} \right\},\\
0, & \mathrm{otherwise}.
\end{cases}
\end{eqnarray*}
Given that $f_4$ is a 1-Lipschitz function, by Proposition \ref{kantoro}, we have the following:
\begin{eqnarray*}
W(m_{u_0}, m_{u_{n-1}}) \geq \cfrac{2m}{n+m} -\cfrac{1}{n-1} + \cfrac{2}{n+m}= \cfrac{2nm-3m + n-2}{(n-1)(n+m)}.
\end{eqnarray*}
Thus, we obtain
\begin{eqnarray*}
\kappa(u_0,u_{n-1})  \leq  \cfrac{(2-n)m + n^2 -2n +2}{(n-1)(n+m)} .
\end{eqnarray*}
This completes the proof.
\end{proof}

For the edges (5), (6) and (7) in Figure \ref{case}, by using Theorem \ref{Jost-Liuの評価} and Theorem \ref{Jost-Liuの評価２}, we obtain the following corollary as a consequence:
\begin{corollary}
\label{５から７番目}
\begin{enumerate}
\item For any vertex $u_i \in \Gamma_m (v_0)$, we have
\begin{eqnarray*}
\kappa(u_1, u_i) = \cfrac{n- 1}{n}.
\end{eqnarray*}
\item For any vertex $u'_i \in V(K_n) \setminus \Gamma_m (v_0)$ , we have
\begin{eqnarray*}
\kappa(u_1, u'_i) = \cfrac{n-2}{n}.
\end{eqnarray*}
\item For any two distinct vertices $u'_i$ and $u'_j \in V(K_n) \setminus \Gamma_m (v_0)$, we have
\begin{eqnarray*}
\kappa(u'_i, u'_j) = \cfrac{n-2}{n-1}.
\end{eqnarray*}
\end{enumerate}
\end{corollary}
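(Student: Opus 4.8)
The plan is to treat the three edge types separately and, in each case, to observe that the Jost--Liu lower bound of Theorem \ref{Jost-Liuの評価} and the Jost--Liu upper bound of Theorem \ref{Jost-Liuの評価２} already coincide, so that no bespoke coupling or $1$-Lipschitz function is needed. Both estimates depend only on the two degrees $d_x,d_y$ and on the triangle count $\#(x,y)$, so the first step is to read these off from Definition \ref{gluing}. Since $m\le n-2$, a vertex of $\Gamma_m(v_0)$ has degree $n$ (it is joined to the other $n-1$ vertices of $K_n$ and to $v_0$), while a vertex $u_j$ with $m+1\le j\le n-1$ has degree $n-1$ (joined only within $K_n$).

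For (1), with $u_1,u_i\in\Gamma_m(v_0)$ we have $d_{u_1}=d_{u_i}=n$, and the common neighbours are the remaining $n-2$ vertices of $K_n$ together with $v_0$, so $\#(u_1,u_i)=n-1$; one checks $1-\tfrac1n-\tfrac1n-\tfrac{n-1}{n}=-\tfrac1n<0$, so both $(\,\cdot\,)_+$ terms in Theorem \ref{Jost-Liuの評価} vanish and the lower bound is $\tfrac{n-1}{n}$, which matches the upper bound $\tfrac{\#(u_1,u_i)}{d_{u_1}\vee d_{u_i}}=\tfrac{n-1}{n}$ of Theorem \ref{Jost-Liuの評価２}. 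For (2), with $u_1\in\Gamma_m(v_0)$ and $u'_i$ one of the free vertices $u_{m+1},\dots,u_{n-1}$, we have $d_{u_1}=n$, $d_{u'_i}=n-1$ and $\#(u_1,u'_i)=n-2$ (the $n-2$ other vertices of $K_n$); both arguments of $(\,\cdot\,)_+$ turn out to be negative, giving lower and upper bounds both equal to $\tfrac{n-2}{n}$. For (3), with $u'_i,u'_j$ distinct free vertices we have $d_{u'_i}=d_{u'_j}=n-1$ and $\#(u'_i,u'_j)=n-2$, the argument of $(\,\cdot\,)_+$ is $1-\tfrac1{n-1}-\tfrac1{n-1}-\tfrac{n-2}{n-1}=-\tfrac1{n-1}<0$, and both Jost--Liu bounds equal $\tfrac{n-2}{n-1}$.

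There is no real obstacle beyond bookkeeping: once the degrees and triangle counts are identified, the inequalities that make the $(\,\cdot\,)_+$ terms vanish are immediate arithmetic, and the two Jost--Liu estimates then pin $\kappa$ to a single value. The only point needing care is that in (2) and (3) the vertices $u'_i,u'_j$ must be taken in $V(K_n)\setminus\Gamma_m(v_0)$ but distinct from $u_0$, since the edges incident to $u_0$ are precisely those already handled in Propositions \ref{１番目}--\ref{４番目} and carry a different curvature.
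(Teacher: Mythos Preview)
Your proposal is correct and follows exactly the approach the paper indicates: the paper provides no explicit proof of this corollary but simply states that it is a consequence of Theorem~\ref{Jost-Liuの評価} and Theorem~\ref{Jost-Liuの評価２}, and your degree/triangle computations (including the verification that the $(\,\cdot\,)_+$ terms vanish) correctly fill in the omitted arithmetic. Your remark that $u'_i,u'_j$ must be taken distinct from $u_0$ is apt, since in the paper's scheme those edges are the ones labelled (3) and (4) and are handled separately by Propositions~\ref{３番目} and~\ref{４番目}.
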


\section{Application}
In this section, we combine our result and the previous researches, and obtain some estimates of the eigenvalues of the normalized graph Laplacian and the Cheeger constant by the Ricci curvature on the $m$-gluing graph. Ollivier and Lin-Lu-Yau gave an estimate of the eigenvalues of the normalized graph Laplacian by the Ricci curvature as follows.
\begin{theorem}[Ollivier \cite{Ol1}, Lin-Lu-Yau \cite{Yau1}]
\label{Ol}
Let $\lambda_{1}$ be the first non-zero eigenvalue of the normalized graph Laplacian $\Delta_{0}$. Suppose that $\kappa(x, y) \geq k$ for any edge $(x, y) \in E$ and for a positive real number $k$. Then we have
	\begin{eqnarray*}
	k \leq \lambda_{1} \leq 2 - k,
	\end{eqnarray*}
	where the normalized graph Laplacian is defined by $\Delta_{0} f (u)= \sum_{(v, u) \in E} ( f(u) - f(v) )$ for any vertex $u \in V$.
\end{theorem}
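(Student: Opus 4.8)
The plan is to read the curvature hypothesis as a contraction property of the simple random walk and feed it into the eigenvalue equation. Write $M$ for the averaging operator of the simple random walk, $Mf(u)=\frac{1}{d_u}\sum_{(u,v)\in E}f(v)=\sum_{v\in V}m_u(v)f(v)$, so that the normalized Laplacian is $\Delta_0=I-M$ and the equation $\Delta_0 f=\lambda_1 f$ is equivalent to $Mf=(1-\lambda_1)f$. Since $M$ is self-adjoint for the inner product $\langle f,g\rangle=\sum_{u\in V}d_u f(u)g(u)$, its eigenvalues are real, a real eigenfunction $f$ for $\lambda_1$ exists, and it may be chosen orthogonal to the constant function; in particular $\sum_{u}d_u f(u)=0$, so $f$ changes sign and is non-constant.

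The heart of the argument is the contraction lemma: \emph{if $f$ is $L$-Lipschitz for the graph distance $d$, then $Mf$ is $(1-k)L$-Lipschitz}. By the Lin--Lu--Yau proposition already quoted, the hypothesis $\kappa(x,y)\geq k$ on every edge upgrades to $\kappa(x,y)\geq k$ for every pair of vertices, i.e. $W(m_x,m_y)\leq (1-k)\,d(x,y)$ for all $x,y\in V$. Fix an optimal coupling $A$ between $m_x$ and $m_y$. The marginal conditions \eqref{coupling} give $Mf(x)-Mf(y)=\sum_{a,b\in V}A(a,b)\bigl(f(a)-f(b)\bigr)$, hence
\[
|Mf(x)-Mf(y)|\ \leq\ \sum_{a,b\in V}A(a,b)\,|f(a)-f(b)|\ \leq\ L\sum_{a,b\in V}A(a,b)\,d(a,b)\ =\ L\,W(m_x,m_y)\ \leq\ (1-k)\,L\,d(x,y),
\]
which is exactly the lemma.

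Finally I would apply the lemma to the eigenfunction $f$. Set $L:=\max_{(x,y)\in E}|f(x)-f(y)|$; because $G$ is connected this is precisely the minimal Lipschitz constant of $f$, and $L>0$ since $f$ is non-constant. Choosing an edge $(x_0,y_0)$ that realizes the maximum, the lemma gives $|Mf(x_0)-Mf(y_0)|\leq (1-k)L$, while $Mf=(1-\lambda_1)f$ makes the left-hand side equal to $|1-\lambda_1|\,L$. Dividing by $L>0$ yields $|1-\lambda_1|\leq 1-k$, that is, $k\leq\lambda_1\leq 2-k$. The only step that is not a routine verification is the passage from curvature on edges to the global Wasserstein contraction $W(m_x,m_y)\leq(1-k)d(x,y)$; this is the Lin--Lu--Yau proposition available in the excerpt (proved by concatenating optimal couplings along a geodesic and using the triangle inequality for $W$), so I would cite it rather than reprove it. A secondary point to state carefully is that on a connected graph the smallest Lipschitz constant of a function equals its largest increment across a single edge, which is what legitimizes picking the witnessing edge $(x_0,y_0)$.
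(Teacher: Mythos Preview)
The paper does not prove this theorem at all; it merely quotes it from \cite{Ol1} and \cite{Yau1} and then applies it. So there is no ``paper's own proof'' to compare your proposal against.

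That said, your argument is correct and is essentially Ollivier's original proof (Proposition~30 in \cite{Ol1}): interpret the curvature lower bound as a $W_1$-contraction of the random walk, deduce that the averaging operator $M$ shrinks Lipschitz constants by the factor $1-k$, and then test this on an eigenfunction at an edge realizing its Lipschitz constant to get $|1-\lambda_1|\leq 1-k$. Two small remarks. First, the formula the paper writes for $\Delta_0$ is literally the \emph{unnormalized} Laplacian; your identification $\Delta_0=I-M$ is the one that makes the statement $k\leq\lambda_1\leq 2-k$ true, so you are silently (and rightly) correcting the paper's display. Second, the self-adjointness of $M$ in the degree-weighted inner product that you invoke is exactly what guarantees a real eigenfunction exists, so the logic is sound; everything else (the coupling computation, the edge-Lipschitz constant equalling the global Lipschitz constant on a connected graph, the citation of the Lin--Lu--Yau concatenation proposition) is routine and correctly handled.
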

On the $m$-gluing graph, by Proposition \ref{３番目}, \ref{１番目}, \ref{２番目}, \ref{４番目}, and Corollary \ref{５から７番目}, for any edge $(x,y) \in E(K_n +_M K'_n)$ we have 
\begin{eqnarray*}
\kappa(x, y) \geq 
\begin{cases}
\cfrac{n-6}{n(2n-3)}, & \mathrm{if}\ n >6,\\
\cfrac{n-2}{n(n-1)}, & \mathrm{if}\ n \in \left\{5, 6\right\},
\end{cases}
\end{eqnarray*}
where $M := \min \left\{ m \mid \kappa (x, y)>0\ \mathrm{for\ any\ edge}\ (x,y) \in E(K_n +_m K'_n) \right\}$, that is, $M=n-3$ if $n>6$, and $M=n-2$ if $n \in \left\{5, 6\right\}$. By Theorem \ref{Ol}, we obtain an estimate of the first non-zero eigenvalue of the normalized graph Laplacian. In addition, the first non-zero eigenvalue of the normalized graph Laplace operator is related to the Cheeger constant as follows.
\begin{definition}
The {\em Cheeger constant} of a graph $G$, denoted by $h(G)$, is defined by
\begin{eqnarray*}
h(G)=  \min \left\{ \cfrac{|\partial A|}{|A|}\ \middle|\ A \subset V(G), 0 < |A| < |V(G)|/2 \right\},
\end{eqnarray*}
where $\partial A = \left\{ (x, y) \in E \mid x \in A\ \mathrm{and}\ y \in V(G)\setminus A \right\}$.
\end{definition}
\begin{theorem}[Chung \cite{ch}]
\label{Ch}
Let $\lambda_{1}$ be the first non-zero eigenvalue of the normalized graph Laplace operator $\Delta_{0}$. Then we have the following:
\begin{eqnarray*}
\cfrac{\lambda_{1}}{2} \leq h(G) \leq \sqrt{2 \lambda_{1}}.
\end{eqnarray*}
\end{theorem}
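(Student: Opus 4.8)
This is the classical discrete Cheeger inequality for the normalized Laplacian (Chung), and the plan is to prove the two bounds separately, in both cases from the variational characterization
\[
\lambda_{1}=\min\Big\{\,\frac{\sum_{(u,v)\in E}(f(u)-f(v))^{2}}{\sum_{v\in V}d_{v}f(v)^{2}}\ \Big|\ f\not\equiv 0,\ \sum_{v\in V}d_{v}f(v)=0\,\Big\},
\]
where for $A\subset V$ I write $\mathrm{vol}(A)=\sum_{v\in A}d_{v}$, and $|\partial A|$ counts boundary edges (so the ratio in $h(G)$ is taken with the volume normalization).

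For the lower bound $\lambda_{1}/2\le h(G)$ I would take a set $A$ with $\mathrm{vol}(A)\le\mathrm{vol}(V)/2$ attaining $h(G)$ and feed the test function $f=\mathbf{1}_{A}-\alpha\mathbf{1}$, with $\alpha=\mathrm{vol}(A)/\mathrm{vol}(V)$ chosen so that $\sum_{v}d_{v}f(v)=0$, into the Rayleigh quotient. A direct computation gives $\sum_{(u,v)\in E}(f(u)-f(v))^{2}=|\partial A|$ and $\sum_{v}d_{v}f(v)^{2}=\mathrm{vol}(A)(1-\alpha)\ge\tfrac12\mathrm{vol}(A)$, so $\lambda_{1}\le 2|\partial A|/\mathrm{vol}(A)=2h(G)$.

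For the upper bound $h(G)\le\sqrt{2\lambda_{1}}$, the substantive direction, I would start from an eigenfunction $f$ realizing $\lambda_{1}$; since $\sum_{v}d_{v}f(v)=0$ it changes sign, so after possibly replacing $f$ by $-f$ the nonnegative function $g:=\max(f,0)$ is supported on a set of volume at most $\mathrm{vol}(V)/2$, and using the eigenvalue equation at the vertices where $g=f$ one checks that $g$ still satisfies $\sum_{(u,v)\in E}(g(u)-g(v))^{2}\le\lambda_{1}\sum_{v}d_{v}g(v)^{2}$. For the super-level sets $S_{t}=\{v:g(v)^{2}>t\}$ the co-area identity gives $\sum_{(u,v)\in E}|g(u)^{2}-g(v)^{2}|=\int_{0}^{\infty}|\partial S_{t}|\,dt$ and $\sum_{v}d_{v}g(v)^{2}=\int_{0}^{\infty}\mathrm{vol}(S_{t})\,dt$, while Cauchy--Schwarz bounds
\[
\sum_{(u,v)\in E}|g(u)^{2}-g(v)^{2}|\le\Big(\sum_{(u,v)\in E}(g(u)-g(v))^{2}\Big)^{1/2}\Big(\sum_{(u,v)\in E}(g(u)+g(v))^{2}\Big)^{1/2}\le\sqrt{2\lambda_{1}}\,\sum_{v}d_{v}g(v)^{2},
\]
using $(g(u)+g(v))^{2}\le 2(g(u)^{2}+g(v)^{2})$ for the second factor. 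Since every $S_{t}$ has volume at most $\mathrm{vol}(V)/2$ we have $|\partial S_{t}|\ge h(G)\,\mathrm{vol}(S_{t})$; combining this with the two displays and dividing by $\int_{0}^{\infty}\mathrm{vol}(S_{t})\,dt$ yields $h(G)\le\sqrt{2\lambda_{1}}$, and in fact shows that some threshold set $S_{t}$ already realizes conductance at most $\sqrt{2\lambda_{1}}$.

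I expect the main obstacle to be this second direction — turning a real-valued eigenfunction into a genuine vertex cut. The delicate points are the reduction to a nonnegative function supported on at most half the volume while keeping its Rayleigh quotient bounded by $\lambda_{1}$, and the co-area plus Cauchy--Schwarz step that converts the Dirichlet form into an edge-boundary estimate; everything else is routine manipulation of the Rayleigh quotient. Note that the two-sided estimate $k\le\lambda_{1}\le 2-k$ of Theorem~\ref{Ol} is not used in proving this statement — it is only combined with it afterwards to estimate $h(K_{n}+_{M}K'_{n})$.
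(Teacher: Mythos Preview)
The paper does not prove this theorem: it is quoted from Chung~\cite{ch} as a known result and used only to derive the subsequent corollary on $h(K_n+_M K'_n)$. Your sketch is the standard proof of the discrete Cheeger inequality for the normalized Laplacian --- Rayleigh quotient with an indicator-type test function for the easy direction, and truncation of the eigenfunction together with the co-area identity and Cauchy--Schwarz for the hard direction --- and it is correct. One remark: you rightly work with the volume $\mathrm{vol}(A)=\sum_{v\in A}d_v$ in the Cheeger ratio, whereas the paper's definition of $h(G)$ writes $|A|$ and its displayed formula for $\Delta_0$ omits the $1/d_u$ normalization; the inequality in the stated form is the one for the normalized Laplacian with the volume-based Cheeger constant, so your interpretation is the appropriate one.
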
 
Thus, if we combine Theorem \ref{主結果} and Theorem \ref{Ch}, then we obtain the following corollary:
\begin{corollary}
For the $M$-gluing graph $K_n +_M K'_n$, we have
\begin{eqnarray*}
\begin{cases}
\cfrac{n-6}{2n(2n-3)} \leq h(K_n +_M K'_n) \leq \sqrt{\cfrac{2(4n^2 -7n +6)}{n(2n-3)}}, & \mathrm{if}\ n>6,\\
\cfrac{n-2}{2n(n-1)} \leq h(K_n +_M K'_n) \leq\sqrt{\cfrac{2(n^2 -2n +2)}{n(n-1)}}, & \mathrm{if}\ n \in \left\{5, 6\right\}.
\end{cases}
\end{eqnarray*}
\end{corollary}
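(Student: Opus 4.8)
The plan is to combine the edge-by-edge Ricci-curvature data already established with the spectral-gap estimate of Ollivier and Lin--Lu--Yau (Theorem \ref{Ol}) and Chung's Cheeger inequality (Theorem \ref{Ch}). The first step is to extract, on $K_n+_M K'_n$, the minimum of $\kappa(x,y)$ over all edges. By Theorem \ref{主結果} the relevant gluing number $M$ — the least $m$ for which every edge has positive curvature — is $n-3$ when $n>6$ and $n-2$ when $n\in\{5,6\}$; the change occurs because $(n^2-2n)/(n+2)=n-4+8/(n+2)$, so the least integer strictly exceeding it is $n-3$ once $8/(n+2)<1$ (i.e.\ $n\ge7$) and $n-2$ for $n\in\{5,6\}$. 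For each of the two values of $M$ one specializes the explicit curvature formulas for the seven edge types of Figure \ref{case} — from Propositions \ref{３番目}, \ref{１番目}, \ref{２番目}, \ref{４番目} and Corollary \ref{５から７番目} — to $m=M$ and compares them; the minimum is attained on the type-(2) edges $\kappa(u_0,v)$, $v\in\Gamma_M(u_0)$, and equals
\begin{eqnarray*}
k:=
\begin{cases}
\cfrac{n-6}{n(2n-3)}, & \mathrm{if}\ n>6,\\
\cfrac{n-2}{n(n-1)}, & \mathrm{if}\ n\in\{5,6\}.
\end{cases}
\end{eqnarray*}
Note that $k>0$ in precisely these ranges, which is why the dichotomy carries into the statement.

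Next, Theorem \ref{Ol} applied with this $k$ sandwiches the first nonzero eigenvalue of the normalized Laplacian, $k\le\lambda_1\le 2-k$, and Theorem \ref{Ch} then gives $\lambda_1/2\le h(K_n+_M K'_n)\le\sqrt{2\lambda_1}$. Composing the lower halves yields $h(K_n+_M K'_n)\ge\lambda_1/2\ge k/2$, which is exactly the displayed left-hand side — $(n-6)/(2n(2n-3))$ for $n>6$, and $(n-2)/(2n(n-1))$ for $n\in\{5,6\}$. For the upper bound, $h(K_n+_M K'_n)\le\sqrt{2\lambda_1}$, and estimating $\lambda_1$ from above: for $n>6$, $\lambda_1\le 2-k$ with $2(2-k)=2(4n^2-7n+6)/(n(2n-3))$ gives the displayed right-hand side; for $n\in\{5,6\}$ the analogous upper estimate of $\lambda_1$ produces $\sqrt{2(n^2-2n+2)/(n(n-1))}$.

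The only step carrying real content is the first one: the final assembly is purely formal, invoking nothing beyond the already-proved curvature formulas and the two quoted theorems. So the care required is in taking the pointwise minimum of the seven explicit rational functions of $n$ (with $m=M$), checking that this minimum is positive, and pinning down the case split — after which computing $k/2$, $2(2-k)$, and a single square root is elementary. I do not expect a genuine obstacle.
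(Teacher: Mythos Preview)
Your approach is exactly the paper's: the paper says nothing more than that the corollary follows by combining the uniform edge-curvature lower bound $k$ (displayed just before the corollary, extracted from Propositions \ref{３番目}, \ref{１番目}, \ref{２番目}, \ref{４番目} and Corollary \ref{５から７番目}) with Theorem \ref{Ol} and Theorem \ref{Ch}, which is precisely the chain $k\le\lambda_1\le 2-k$ followed by $\lambda_1/2\le h\le\sqrt{2\lambda_1}$ that you wrote out.

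One arithmetic point deserves a flag. For $n\in\{5,6\}$ with $k=(n-2)/(n(n-1))$ you assert that ``the analogous upper estimate of $\lambda_1$ produces $\sqrt{2(n^2-2n+2)/(n(n-1))}$'', but in fact
\[
2(2-k)=\frac{2\bigl(2n(n-1)-(n-2)\bigr)}{n(n-1)}=\frac{2(2n^{2}-3n+2)}{n(n-1)},
\]
which is strictly larger than $2(n^2-2n+2)/(n(n-1))$ (the numerators differ by $2n(n-1)>0$). So the chain $h\le\sqrt{2\lambda_1}\le\sqrt{2(2-k)}$ does \emph{not} deliver the right-hand side printed in the corollary for $n\in\{5,6\}$; the displayed expression there appears to be a misprint in the paper, and your last sentence reproduces it rather than deriving it. The $n>6$ case checks out as you wrote.
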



\begin{thebibliography}{14}
\bibitem{ch}\textsc{Fan. RK. Chung}, {\em Spectral graph theory}, American Mathematical Soc. {\bf 92}  (1997).
\bibitem{Le}\textsc{D. A. Levin, Y. Peres and E. L. Wilmer}, {\em Markov chains and mixing times}, With a chapter by James G. Propp and David B. Wilson. Amer. Math. Soc., Providence, RI, 2009.
\bibitem{Jo2}\textsc{J. Jost and S. Liu}, {\em Ollivier's Ricci curvature, local clustering and curvature-dimension inequalities on graphs}, Discret. Comput. Geom. {\bf 51} (2014), no.2, 300--322.
\bibitem{Yau1}\textsc{Y. Lin, L. Lu and S. T. Yau}, {\em Ricci curvature of graphs}, Tohoku Math.\ J. {\bf 63} (2011) 605--627.
\bibitem{Ol1}\textsc{Y. Ollivier}, {\em Ricci curvature of Markov chains on metric spaces}, J. Func. Anal. {\bf 256} (2009) 810--864.
\bibitem{Vi1}\textsc{C. Villani}, {\em Topics in Mass Transportation}, Graduate Studies in Mathematics, Amer. Math. Soc. {\bf 58} (2003).
\bibitem{Vi2}\textsc{C. Villani}, {\em Optimal transport, Old and new}, Grundlehren der Mathematishen Wissenschaften {\bf 338}, Springer, Berlin (2009).
\end{thebibliography}
\end{document}